\documentclass[reqno, 12pt]{amsart}
\usepackage[utf8]{inputenc}
\usepackage[T1]{fontenc}

\usepackage[normalem]{ulem}
\usepackage{amsmath}
\usepackage{amssymb}
\usepackage{amsfonts}
\usepackage{graphicx}
\usepackage{amsthm}
\usepackage{enumerate}
\usepackage{lscape}
\usepackage{dsfont}
\usepackage{color}
\usepackage{mathtools}

\usepackage{hyperref}

\hypersetup{
	colorlinks=true,
	linkcolor=blue,
	citecolor=blue,
	urlcolor=blue
}

\usepackage{cleveref}

\usepackage{setspace}

\newcommand{\R}{\mathds{R}}

\newcommand{\CP}{\mathds{C}\mathrm{P}}
\newcommand{\N}{\mathds{N}}

\newcommand{\C}{\mathds{C}}

\newcommand{\K}{K\"{a}hler}

\def\b{\beta}

\def\b1{{\rm id}}

\newtheorem{theor}{Theorem}[section]

\newtheorem{lem}[theor]{Lemma}
\newtheorem{cor}[theor]{Corollary}

\newtheorem{example}{Example}
\newtheorem{rmk}{Remark}

\crefname{proposition}{proposition}{propositions}
\Crefname{proposition}{Proposition}{Propositions}

\begin{document}
	
	\title[Radial Projectively Induced Canonical Metrics]{Radial Projectively Induced Canonical K\"ahler Metrics: Rigidity and Classification}
	
	\author[C. Arezzo]{Claudio Arezzo}
	\address{(Claudio Arezzo) Mathematics Section\\ International Centre for Theoretical Physics
	}
	\email{arezzo@ictp.it}
	
	\author[A. Loi]{Andrea Loi}
	\address{(Andrea Loi) Dipartimento di Matematica \\
		Universit\`a di Cagliari, Via Ospedale 72, 09124  (Italy)}
	\email{loi@unica.it}
	
	\author[G. Placini]{Giovanni Placini}
	\address{(Giovanni Placini) Dipartimento di Matematica \\
		Universit\`a di Cagliari, Via Ospedale 72, 09124  (Italy)}
	\email{giovanni.placini@unica.it}

	\author[M. Zedda]{Michela Zedda}
	\address{(Michela Zedda) Dipartimento di Scienze Matematiche, Fisiche e Informatiche \\
		Universit\`a di Parma (Italy)}
	\email{michela.zedda@unipr.it}

	\thanks{
		This paper has been partially supported by the group GNSAGA of INdAM. 
		The second and third authors have been supported  by ProBiki of Fondazione di Sardegna.
		The fourth author has also been supported by the project Prin 2022 – Real and Complex Manifolds: Geometry and Holomorphic Dynamics-Italy.
	}

	\subjclass[2020]{53C55, 32Q20, 32Q57}
\makeatletter
\renewcommand{\subjclassname}{2020 Mathematics Subject Classification}
\makeatother
	\keywords{K\"ahler--Einstein metric; constant scalar curvature K\"ahler metric; Calabi's rigidity theorem; K\"ahler immersion}

\begin{abstract}
We study radial K\"ahler  metrics on domains of $\mathds{C}^n$, $n\geq 2$, admitting a K\"ahler  immersion into a finite- or infinite-dimensional complex projective space. We classify those with constant non-negative scalar curvature: up to a linear change of coordinates, they are positive integer multiples of the Fubini--Study metric, the flat metric, or, in complex dimension two, generalized Burns--Simanca metrics. We also prove that every radial projectively induced K\"ahler--Einstein metric has constant holomorphic sectional curvature and is therefore a Fubini--Study, flat, or complex hyperbolic metric. Finally, we show that a radial infinitely projectively induced extremal K\"ahler metric has unbounded maximal radial domain if and only if it is scalar-flat.
\end{abstract}

\maketitle
	
	\tableofcontents  

\section{Introduction and statement of the results}\label{sec:intro}

A  K\"ahler metric $g$ on a complex manifold $M$ is said to be \emph{projectively induced} if $(M,g)$ admits a  K\"ahler immersion, namely a holomorphic and isometric immersion, into the complex projective space $(\mathds{C}{\rm P}^{N},g_{FS})$, $N\leq\infty$, endowed with the Fubini--Study metric. We refer to \cite{LoiZeddaBook} for an account of the subject, whose foundations go back to the seminal work of Calabi \cite{calabi}. A  K\"ahler immersion is said to be \emph{full} if its image is not contained in any proper totally geodesic complex projective subspace of the ambient projective space. Equivalently, the projective span of its image is the whole ambient projective space. We say that $g$ is \emph{finitely projectively induced} or \emph{infinitely projectively induced} according to whether it admits a full  K\"ahler immersion into $\mathds C{\rm P}^{N}$ with $N<\infty$ or into $\mathds C{\rm P}^{\infty}$, respectively.

A classical problem is to determine which canonical  K\"ahler metrics, such as  K\"ahler-Einstein (KE), constant scalar curvature  K\"ahler (cscK), or, more generally, extremal metrics in the sense of Calabi \cite{calextrem}, are projectively induced. The finite-dimensional setting displays strong rigidity phenomena. Recent rigidity results for holomorphic isometries into finite-dimensional projective spaces can be found in \cite{ArezzoLiLoi}. Among the classical results, Chern \cite{chern} and Tsukada \cite{tsukada} classified finitely projectively induced KE metrics in low codimension, while Hulin proved that the Einstein constant of a compact projectively induced KE manifold is positive \cite{HU}; see also \cite{kob,KON} for results concerning the cscK condition in low codimension. In the non-positive curvature setting, Umehara \cite{umehara} obtained rigidity results for Einstein  K\"ahler submanifolds of complex Euclidean and hyperbolic spaces. 
Conjecturally, the only finitely projectively induced KE metrics are open subsets of homogeneous  K\"ahler manifolds.

The infinite-dimensional setting is considerably more flexible. By Calabi's criterion,  the flat metric $g_0$ on $\mathds{C}^n$ and every positive multiple  of the complex hyperbolic metric $g_{hyp}$ is infinitely projectively induced, whereas $c g_{FS}$ is projectively induced if and only if $c\in\mathds{Z}^{+}$; see \cite{calabi,LoiZeddaBook}. Moreover, non-homogeneous KE submanifolds of $\mathds{C}{\rm P}^{\infty}$ were constructed in \cite{LoiZedda11,VARI}. Nevertheless, several rigidity phenomena arise under additional geometric assumptions. For instance, Loi, Salis and Zuddas conjectured that every Ricci-flat projectively induced  K\"ahler metric is flat \cite{LSZ}; this conjecture has been verified for several distinguished families, including the Calabi and Stenzel metrics \cite{LOIZEZU,ZStenzel}. One cannot hope for infinite-dimensional analogues of \cite{HU,kob,KON}, as there are examples of non-homogeneous complete (negative) KE metrics immersed in $\CP^\infty$ \cite{LoiZedda11,VARI}, or even in any infinite-dimensional complex space form \cite{CaibarLoi}.

The starting point of the present paper is \cite{LSZext}, where Loi, Salis and Zuddas proved several results on extremal radial metrics admitting  a  K\"ahler immersion into a complex space form. Recall that a  K\"ahler metric on a domain of $\mathds{C}^{n}$ is \emph{radial} if its  K\"ahler form can be written as
\begin{equation*}
\omega=\frac{i}{2}\partial\bar\partial f(r),
\qquad
r=|z|^2.
\end{equation*}
We denote by $(r_{\mathrm{inf}},r_{\mathrm{sup}})$ the maximal interval of definition of the radial potential $f$. For $k\in\mathds Z^+$, we call \emph{generalized Burns--Simanca metric} the radial  K\"ahler metric $g_{k,BS}$ on $\mathds C^2\setminus\{0\}$ whose  K\"ahler form is
$$
\omega_{k,BS}=\frac{i}{2}\partial\bar\partial\bigl(k\log|z|^2+|z|^2\bigr).
$$
The case $k=1$ is the classical Burns--Simanca metric; see \cite{Simanca}.

The main result of this paper is the following.

\begin{theor}\label{thmNew}
Let $g$ be a radial projectively induced extremal  K\"ahler metric on a domain of $\mathds{C}^{n}$, $n\geq2$. Then the following hold:
\begin{enumerate}[1)]
\item $r_{\mathrm{sup}}=+\infty$ if and only if $g$ is a cscK metric with non-negative scalar curvature. More precisely, if $g$ is infinitely projectively induced, then
$r_{\mathrm{sup}}=+\infty$ if and only if 
$g$ is scalar-flat.\label{PartExtremal}
\item
If $g$ is a cscK metric with non-negative scalar curvature, then, up to a linear change of coordinates, one of the following holds: \label{PartcscK}
	\begin{enumerate}[(2a)]
		\item $g$ is a positive integer multiple of the Fubini--Study metric; \label{CasoA}
		\item $g$ is the flat metric; \label{CasoB}
		\item $n=2$ and $g$ is a generalized Burns--Simanca metric.
		\label{CasoC}
	\end{enumerate}

	\item If $g$ is KE, then it has constant holomorphic sectional curvature. \label{PartKE}
\end{enumerate}

\end{theor}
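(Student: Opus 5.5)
We work throughout with the radial ODE formalism. Write $\omega=\frac{i}{2}\partial\bar\partial f(r)$ and set $y(r)=rf'(r)$, the momentum profile. Positivity of $\omega$ is equivalent to $y>0$ and $y'>0$.

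In terms of $y$, the standard formulas give the following:
\begin{itemize}
\item the volume form is proportional to $y^{n-1}y'/r^{n-1}$;
\item the extremal condition (Calabi) amounts to the scalar curvature being an affine function of $y$, namely $s=A+By$;
\item after integration this yields a polynomial ODE for $\varphi(y):=ry'(r)$, of the form
\begin{equation*}
\varphi(y)=\frac{1}{y^{n-1}}\Bigl(c_0+c_1y+\dots+ c_{n-1}y^{n-1}+\alpha y^{n}+\beta y^{n+1}+\gamma y^{n+2}\Bigr),
\end{equation*}
where $\beta$ and $\gamma$ are proportional to $A$ and $B$ respectively.
\end{itemize}

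Projective inducedness is handled with Calabi's criterion. The function $e^{f(r)}-1$, or after a normalisation $e^{f}$ expanded in powers of $r$, must have all Taylor coefficients nonnegative on some annulus. Equivalently, the diastasis $e^{D}=\sum b_j r^j$ must have $b_j\ge 0$, and the number of nonzero $b_j$ must be finite exactly in the finitely induced case. When $r_{\inf}>0$ one works on the punctured domain, where $f$ may contain a $k\log r$ term. The criterion then becomes nonnegativity of the Laurent coefficients of $r^{k}e^{f}$, and this is precisely how the integer $k$ in $g_{k,BS}$ arises.

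For part 1), we integrate $dr/r=dy/\varphi(y)$. Then $r_{\sup}=+\infty$ holds iff $\int^{y_{\sup}}dy/\varphi(y)$ diverges.
\begin{itemize}
\item If $\gamma\neq 0$ (strictly extremal), or if $\beta<0$ (negative constant scalar curvature), then $\varphi$ grows like $y^3$ or $y^2$ on any interval where it stays positive, or it vanishes at a finite $y$ with nonzero derivative. The integral converges or the profile is bounded, and we would show that in this case the analytic continuation of $e^{f}$ has a singularity on $|z|^2=r_{\sup}<\infty$.
\item Conversely, $\beta\ge 0$, $\gamma=0$ gives $\varphi$ of at most quadratic growth. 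When $\varphi\sim y^2$ we get $y\sim$ a logarithm-free rational function of $r$, namely the Fubini--Study profile $y=cr/(1+r)$ type, with $r_{\sup}=\infty$. When $\beta=0$ we get linear growth, so $y\sim r$ and again $r_{\sup}=\infty$.
\end{itemize}
For the refinement in the infinitely induced case, positive scalar curvature forces $y$ bounded, so the metric has finite volume near $r=\infty$. Combined with Calabi's criterion (the coefficients $b_j\ge0$ with $e^{D}$ extending holomorphically to all of $\C^n$ with polynomial growth), this forces $e^D$ to be a polynomial, hence a finite immersion. So an infinitely induced metric with $r_{\sup}=\infty$ must be scalar-flat.

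For part 2), we solve the ODE explicitly in each case.
\begin{itemize}
\item \textbf{$s>0$.} Here $y_{\sup}$ is finite and $e^{f}$ is a polynomial in $r$. The ODE, with $s$ constant, then forces $e^f=(1+\lambda r)^m$ up to the $\log$ term. Integrality of the Calabi coefficients gives $m\in\mathds Z^+$, and a residual $k\log r$ term is ruled out because it would produce $s$ nonconstant unless $k=0$. This gives case (2a).
\item \textbf{$s=0$.} The ODE gives $\varphi=(c_0+\dots+\alpha y^n)/y^{n-1}$. Nonnegativity of all coefficients of $r^ke^f$ together with the explicit solution should force $\varphi=y-k$ type profiles, i.e.\ $f=k\log r+\lambda r$.
\begin{itemize}
\item For $n\ge3$, scalar flatness of $k\log r+r$ fails unless $k=0$, since it is Ricci-flat-like only in dimension two. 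So $k=0$, which is the flat case (2b).
\item For $n=2$ we get (2c). The linear change of coordinates absorbs $\lambda$.
\end{itemize}
\end{itemize}
The \textbf{main obstacle} is to exclude the other scalar-flat solutions, those with lower-order $c_j\ne0$, which are the LeBrun-type metrics. For this I would expand $e^{f}$ near $r_{\inf}$ or at infinity and exhibit a negative coefficient, using the asymptotic $y\approx \alpha r$ plus a correction $\sim r^{-(n-1)}$. Such a correction gives non-integral, or sign-alternating, contributions to the Laurent coefficients.

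For part 3), the KE condition is $\mathrm{Ric}=\lambda\omega$, i.e.\ $\log(y^{n-1}y'/r^{n-1})=-\lambda f+\text{const}$.
\begin{itemize}
\item If $\lambda\ge0$, then by parts 1) and 2) the metric is Fubini--Study or flat. Burns--Simanca is not Einstein.
\item If $\lambda<0$, we use Calabi's criterion on $e^{f}$ together with the ODE written as an equation for the power series coefficients. Comparing the first few coefficients forces the solution with $c_0\ne0$ to have a negative coefficient, as in the argument that rules out non-homogeneous radial examples. This leaves $y=cr/(1-r)$, i.e.\ the hyperbolic metric, which has constant holomorphic sectional curvature.
\end{itemize}
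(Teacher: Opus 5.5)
Your ODE framework is the right one, but the proposal has genuine gaps at each load-bearing step.

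\textbf{Structure and Part 1.} You never establish the structural fact the whole proof rests on. For $n\ge2$, Calabi's extension on the simply connected annulus, followed by Hartogs' theorem, forces $r_{\inf}=0$ and $e^{f}=\sum_{j\ge0}a_jr^j$ on the whole ball, with $a_j\ge0$ and infinitely many $a_j>0$ in the infinite case. There are no Laurent terms: your $k\log r$ is just the first index with $a_k>0$. Also, the extremal ODE has only four constants, $ry'=y-Ay^{1-n}-By^{2-n}-Cy^2-Dy^3$, so in your notation $c_2=\dots=c_{n-1}=0$ and $\alpha=1$.

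In Part 1 your first bullet fails in the bounded-profile case. If $\varphi$ has a simple zero above the range of $y$, your own criterion gives $r_{\sup}=+\infty$ for a strictly extremal or negatively curved metric, and the promised singularity of $e^f$ on $|z|^2=r_{\sup}$ is never produced. The missing step is that $y\to+\infty$ as $r\to r_{\sup}^-$ for every infinitely induced extremal metric:
\begin{itemize}
\item bounded $y$ with $r_{\sup}<\infty$ contradicts maximality, since the autonomous ODE in $\log r$ continues past $\log r_{\sup}$;
\item bounded $y$ with $r_{\sup}=\infty$ contradicts $a_J>0$ for arbitrarily large $J$.
\end{itemize}
Your polynomial-growth remark is essentially the second point, but you only apply it to positive scalar curvature, not to $D\neq0$ or $C<0$. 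Once $y\to\infty$, $D<0$ or $C<0$ gives $r_{\sup}<\infty$ by integrating $d\log r/dy\le 1/(\delta y^3)$ or $1/(\delta y^2)$, while $D>0$ or $C>0$ contradicts $ry'>0$. The finitely induced case is handled in the paper by quoting the Loi--Salis--Zuddas rigidity theorem for finite-dimensional targets. This includes your unproved claim that for $s>0$ the ODE forces $e^f=(1+\lambda r)^m$.

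\textbf{Parts 2 and 3.} Here the two hard steps are left as heuristics. In Part 2 you name the obstacle, but real asymptotics of $y$ as $r\to+\infty$ do not by themselves constrain the signs of the Taylor coefficients of $e^f$; global information on $\mathds{C}$ is needed. The paper gets it in two moves.
\begin{itemize}
\item The expansion $y=k+Kr^l+o(r^l)$ at $r=0$, with integers $k,l$ read off from the first two nonzero $a_j$, pins down $A=(l+1-n)k^n$ and $B=(n-l)k^{n-1}$.
\item Since $r_{\sup}=\infty$, $F=e^f$ is entire, so $y=\xi F'/F$ is meromorphic on $\mathds{C}$.
\end{itemize}
Then $k=0$ gives $y=ar$, the flat metric. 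For $n=2$, $k\ge1$, the ODE integrates to $(y-k)(y+(l-1)k)^{l-1}=a\xi^l$; this forces $y$ to be entire of linear growth, hence $l=1$. For $n\ge3$, $k\ge1$:
\begin{itemize}
\item if $l\in\{n-1,n\}$, then $y^{n-1}$ or $y^n$ equals a polynomial with simple zeros, which is impossible;
\item otherwise, uniqueness for the holomorphic ODE at the roots of $P$, together with Little Picard, excludes $y$.
\end{itemize}

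In Part 3, ``comparing the first few coefficients'' cannot work. Already for $n=2$, write $F=a_kr^kE(t)$ with $t=\tfrac{K}{k}r^L$. Then $E(t)=1+\tfrac{k}{L}t+\tfrac{k(k-1)}{2L^2}t^2+O(t^3)$, all nonnegative, so the obstruction is global. The paper argues as follows. If $k=0$, then $A=0$ and the metric is hyperbolic. If $k>0$, then $A=k^n(1+|C|k)$ and one gets the integrality $L=n+(n+1)|C|k=l$. It then shows that $E$ has a branch singularity of type $(1-t/t_*)^{1+1/n}$ at some $t_*<0$ with $|t_*|<t_\infty$, while $E$ is holomorphic along $(0,t_\infty)$. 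Vivanti--Pringsheim then rules out nonnegative coefficients.
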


In the finitely projectively induced case the behaviour is much more rigid: \cite[Theorem~1.1]{LSZext} forces $g$ to have positive constant holomorphic sectional curvature, and hence, up to a linear change of coordinates, $g$ is a positive integer multiple of the Fubini--Study metric. In particular, $g$ is cscK with positive scalar curvature and its maximal radial domain satisfies $r_{\mathrm{sup}}=+\infty$.

The condition $r_{\mathrm{sup}}=+\infty$ in part \eqref{PartExtremal} is essential. Indeed, there exist radial extremal  K\"ahler metrics defined on an annulus and immersed in $\CP^\infty$ which do not have constant scalar curvature; see, for instance, \cite[Example~1]{LSZext}. Conversely, the cscK condition alone does not imply $r_{\mathrm{sup}}=+\infty$: Section \ref{sec:examples} contains infinitely projectively induced radial cscK metrics with negative scalar curvature and bounded maximal radial domain.

Part \eqref{PartcscK} was proved in \cite[Theorem~1.3]{LSZext} under an additional technical assumption on the behaviour of the  K\"ahler potential near the boundary of its domain of definition; see \cite[Definition~1]{LSZext}. While we remove this hypothesis, the assumption on the sign of the scalar curvature is essential. Indeed, Section \ref{sec:examples} provides two families of infinitely projectively induced radial cscK metrics with negative scalar curvature which do not have constant holomorphic sectional curvature. As shown by part \eqref{PartKE}, the  K\"ahler-Einstein condition, however, restores rigidity.
The same conclusion was obtained in \cite[Theorem~1.4]{LSZext} under a further hypothesis on the stability of the immersion. We remove this assumption and, therefore, give a positive answer to \cite[Conjecture~2]{LSZext}.

{\textbf{Organization of the paper:}} The starting point of the proof of \Cref{thmNew} is to show that a projectively induced radial  K\"ahler metric is always defined on a punctured disc; see \Cref{lemmaimp}. This is proved in Section \ref{sec:radial}, where we establish the structural properties of radial metrics induced by a full  K\"ahler immersion into $\mathds{C}{\rm P}^{\infty}$. In Section \ref{sec:classification} we describe the behaviour of the  K\"ahler potential of an extremal radial  K\"ahler metric and its domain of definition, and prove parts \eqref{PartExtremal} and \eqref{PartcscK} of \Cref{thmNew}. Finally, Section \ref{sec:examples} contains examples of infinitely projectively induced radial cscK metrics with negative scalar curvature and the proof of \eqref{PartKE} of \Cref{thmNew}.
            
            \section{Radial K\"ahler metrics induced by a full K\"ahler immersion in $\CP^\infty$}\label{sec:radial}
Let $g$ be a radial  \K\ metric on a complex manifold $M$, equipped with complex coordinates $z_1, \dots ,z_n$. Assume $n\geq 2$.
	Let  $\omega =\frac{i}{2}\partial\bar\partial f(r)$ denote its associated \K\ form
	where $f:(r_{\inf}, r_{\sup})\rightarrow \R$, $r =|z|^2= |z_1|^2 + \cdots + |z_n|^2$, $0\leq r_{\inf}<r<r_{sup}$
	and $(r_{\inf}, r_{\sup})$ is the maximal domain where the radial potential $f$ is defined.
    
	In other words  the metric is defined on  
	the set
	$$D_{r_{\inf}, r_{\sup}}=\{z\in \C^n \ | \ r_{\inf}<|z|^2<r_{\sup}\}$$

	It is not hard to see that 
	the matrix of the metric $g$ and its inverse read  as
	\begin{equation}\label{metric}
		g_{i\bar j}=f''(r) \bar z_i z_j+f'(r)\delta_{ij}, \ \ \  g^{i\bar j}=\frac{\delta_{ij}}{f'(r)} -\frac{f''(r)}{f'(r) (rf'(r))'}\bar z_j z_i.
	\end{equation}

	Set 
	\begin{equation}\label{y(r)}
		y(r):=rf'(r).
	\end{equation}
	The fact that $g$ is a metric is equivalent to 
	$y(r)>0$ and $ry'(r)>0$, $\forall r\in (r_{\inf}, r_{\sup})$.

            In this section we describe the behaviour of radial K\"ahler metrics that are induced by a full K\"ahler immersion in $\CP^\infty$.            
           Theorem \ref{lemmaimp} below shows that radial K\"ahler metrics induced by a full K\"ahler immersion in $\CP^\infty$ are actually defined on a punctured disk $D_{0, r_{\sup}}$ and the exponential of their K\"ahler potential $F(r):=e^{f(r)}$ admits an expansion in $r$ on the whole ball.

Let us start with the following lemma.

	\begin{lem}\label{lemradiallogk}
		Let 
		\[
		F(r)=\sum_{j=0}^\infty a_j\,r^j,\quad a_j\ge0,\quad 0\leq r<r_{\sup},
		\]
		be a real analytic radial function on the open  ball $B_{\sqrt{r_{\sup}}}(0)$ with non-negative coefficients $a_i$ for all $i\in \N$. Assume there exist at least two distinct indices \(j\neq k\) with \(a_j>0\) and \(a_k>0\).  Then the \((1,1)\) form
		\[
		\frac{i}{2}\,\partial\bar\partial\log F(|z|^2)
		\]
		is a K\"ahler form on the punctured ball $D_{0, r_{\sup}}$.
	\end{lem}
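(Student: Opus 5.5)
We reduce the statement to the two positivity conditions recorded after \eqref{y(r)}. Set $f(r)=\log F(r)$ on $(0,r_{\sup})$; this is well defined because $F(r)>0$ for $r>0$ (some $a_j>0$ and all coefficients are non-negative). By \eqref{metric}, the form $\frac{i}{2}\partial\bar\partial f(|z|^2)$ is K\"ahler on $D_{0,r_{\sup}}$ exactly when $y(r)=rf'(r)>0$ and $y'(r)>0$ for every $r\in(0,r_{\sup})$. Indeed, the eigenvalues of $g_{i\bar j}$ are $f'(r)$, with multiplicity $n-1$, in directions orthogonal to $z$, and $(rf')'=y'$ in the radial direction. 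Since $n\ge 2$ and $r>0$, $f'>0$ is equivalent to $y>0$.

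Both quantities have an explicit form. We have $y(r)=rF'(r)/F(r)=\sum_j j a_j r^j\big/\sum_j a_j r^j$, and $y(r)>0$ because at least one index $j\ge1$ has $a_j>0$: two distinct indices carry positive coefficients, so they cannot both be zero.

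For $y'$ we use the standard probabilistic interpretation. Put $t=\log r$ and view $p_j(r)=a_jr^j/F(r)$ as a probability distribution on $\N$. Then $y$ is the mean of $j$ under $p$, and
\[
r\,y'(r)=\frac{d y}{dt}=\frac{\sum_j j^2a_jr^j}{F}-\Bigl(\frac{\sum_j j a_j r^j}{F}\Bigr)^2=\operatorname{Var}_{p(r)}(j).
\]
Equivalently, by the Cauchy--Schwarz inequality, $(\sum j a_j r^j)^2\le(\sum a_jr^j)(\sum j^2a_jr^j)$. The variance is strictly positive, since $p(r)$ assigns positive mass to at least two distinct values $j\neq k$ whenever $r>0$. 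Hence $y'(r)>0$ on $(0,r_{\sup})$.

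The only point needing care, and the step where I expect the real work to lie, is justifying termwise differentiation and the convergence of $\sum j^2 a_j r^j$ on $[0,r_{\sup})$. This follows because a power series with non-negative coefficients that converges on $[0,r_{\sup})$ has radius of convergence at least $r_{\sup}$, so its derivatives converge there too. It is also worth remarking why the origin is excluded: at $r=0$ the distribution may be concentrated at a single index, and $y$ may vanish there.
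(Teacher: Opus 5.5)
Your proof is correct and is essentially the paper's argument: the paper checks $f'>0$ and $f'+rf''>0$ through the identity $f'+rf''=\frac{1}{2F^2}\sum_{j,k}(k-j)^2a_ja_kr^{j+k-1}$, which is exactly your $\operatorname{Var}_{p(r)}(j)/r$ written as a double sum, and strict positivity follows from the same observation that two distinct indices carry positive coefficients. Your remark on the radius of convergence, which justifies termwise differentiation, supplies a detail the paper leaves implicit.
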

	
	\begin{proof}
		Set 
		\[
		f(r)=\log  F(r),
		\]
		The radial \((1,1)\)-form \(\tfrac{i}{2}\partial\bar\partial f(|z|^2)\) is \K\  if and only if for all \(r>0\) one has
		\[
		f'(r)>0
		\quad\text{and}\quad
		f'(r)+r\,f''(r)>0.
		\]
		
		\medskip\noindent
		\emph{Step 1:} \(f'(r)>0\).
		\[
		f'(r)
		=\frac{F'(r)}{F(r)}
		=\frac{\sum_{j=1}^\infty j\,a_j\,r^{j-1}}{\sum_{j=0}^\infty a_j\,r^j}
		>0,
		\]
		since at least one \(a_j>0\) for \(j\ge1\) and the denominator is positive.
		
		\medskip\noindent
		\emph{Step 2:} \(f'(r)+r\,f''(r)>0\).
		
		A direct calculation gives
		\[
		f'(r)+r\,f''(r)
		=\frac{F\,F' + r\bigl(F\,F''-(F')^2\bigr)}{F^2}
		=\frac{1}{2F^2}\sum_{j,k=0}^\infty (k-j)^2\,a_j\,a_k\,r^{\,j+k-1}.
		\]

		Each summand \((k-j)^2a_j a_k r^{j+k-1}\ge0\), and is strictly positive whenever \(j\neq k\) with \(a_j,a_k>0\). By assumption there exist two such indices, so the entire sum is positive.
	\end{proof}

	\begin{theor}\label{lemmaimp}
		Assume $g$  is infinitely projectively induced and radial on the annulus $D_{r_{\inf}, r_{\sup}}$, i.e. 
		$\omega =\frac{i}{2}\partial\bar\partial f(r)$ as above. Then $r_{\inf}=0$. 
		Moreover, there exists
		a sequence $a_j$, $j=0, \dots$ of non-negative real numbers with  infinitely many of them strictly positive 
		such that
		\begin{equation}\label{efr}
			F(r)=e^{f(r)} \;=\; \sum_{j=0}^{+\infty} a_jr^j, \  0\leq r< r_{\sup}.
		\end{equation}
	\end{theor}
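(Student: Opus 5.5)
The plan is to use Calabi's diastasis together with the rotational symmetry of the metric. Let $\varphi:D_{r_{\inf},r_{\sup}}\to\CP^\infty$ be the full K\"ahler immersion. Pick a base point $p$ in the annulus and write $\varphi=[\varphi_0:\varphi_1:\dots]$ with $\varphi_0(p)\neq0$. Then, near $p$, we have $\omega=\frac i2\partial\bar\partial\log\sum_j|\varphi_j|^2$. Comparing with $\omega=\frac i2\partial\bar\partial f(|z|^2)$, the function $\log\sum_j|\varphi_j|^2-f(|z|^2)$ is pluriharmonic. It is therefore of the form $\log|h|^2$ for a nowhere-vanishing holomorphic $h$, at least locally. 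Replacing $\varphi_j$ by $\varphi_j/h$, we obtain the local identity
\[
e^{f(|z|^2)}=\sum_j|\varphi_j(z)|^2 .
\]

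The key step is to globalize this and exploit radiality. Polarize: $F(z\cdot\bar w)$ extends to a function holomorphic in $z$ and antiholomorphic in $w$, and $F(z\cdot\bar w)=\sum_j\varphi_j(z)\overline{\varphi_j(w)}$, i.e. $F(z\bar w)$ is a positive semidefinite kernel. Expand $F(z\cdot \bar w)$ in a power series in $t=z\cdot\bar w$ around a point $t_0=r_0>0$, and use the $U(n)$-invariance to average. Here I intend to apply Calabi's theorem that the space of functions $z\mapsto F(z\cdot\bar w)$ forms a reproducing-type Hilbert space on which $U(n)$ acts unitarily. Decomposing into $U(n)$-isotypic components, the homogeneous parts are forced.

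More concretely, by Calabi's criterion the kernel $F(z\cdot\bar w)$ is positive definite. For $z,w$ in a small set, the unitary invariance $F(Az\cdot\overline{Aw})=F(z\cdot\bar w)$ allows one to extend the definition from a neighborhood of a point to the whole annulus via rotations. Moreover, the fact that $F$ is defined near the annulus lets us consider $F(\lambda z\cdot\bar w)$ with $|\lambda|=1$ varying, i.e. $t$ ranges over circles $|t|=r$. So $F$ is holomorphic on a region containing circles. Positive definiteness of a kernel of the form $\Phi(z\cdot\bar w)$ in dimension $n\ge2$ (a Schoenberg-type theorem for the unit sphere of $\C^n$) forces the Laurent expansion to have only non-negative-index terms with non-negative coefficients, namely $\Phi(t)=\sum_j a_jt^j$ with $a_j\ge0$. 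Here $n\ge2$ is used: the zonal functions $t^j$ and $\bar t^k$ mixtures must be excluded, and negative powers cannot be positive definite.

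With this representation, $F$ is a power series with non-negative coefficients. Its radius of convergence is governed by Pringsheim's theorem, which places the singularity on the positive axis. Hence the series converges on $[0,r_{\sup})$, and $\log F$ is defined and K\"ahler on $D_{0,r_{\sup}}$ by \Cref{lemradiallogk}. Maximality then gives $r_{\inf}=0$. If only finitely many $a_j$ were positive, then $\varphi$ would map into a finite-dimensional $\CP^N$ via the monomials $\sqrt{a_j\binom{j}{\alpha}}z^\alpha$, contradicting fullness through Calabi rigidity. The main obstacle I expect is the Schoenberg-type step: establishing non-negativity and the absence of negative powers for a kernel known only on an annulus. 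I would attack it by integrating against characters over the torus action $z\mapsto e^{i\theta}z$ combined with $SU(n)$ harmonic analysis.
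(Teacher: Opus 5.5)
\textbf{There is a genuine gap}, and it sits exactly where you expect the difficulty. It is more basic than a Schoenberg-type theorem.

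\emph{Globalization.} Your local identity $F(z\cdot\bar w)=\sum_j\varphi_j(z)\overline{\varphi_j(w)}$ holds only for $z,w$ both near $p$. So $t=z\cdot\bar w$ stays near the positive number $|p|^2$. A rotation $A\in U(n)$ moves such pairs to pairs near $(Ap,Ap)$ with the same range of $t$. Rotations therefore neither reach other radii nor make $t$ sweep circles $|t|=r$. Writing $F(\lambda z\cdot\bar w)$ with $|\lambda|=1$ already presupposes the identity for the far-apart points $\lambda z$ and $w$. What is missing is a global statement. Since $n\ge 2$, the annulus $D_{r_{\inf},r_{\sup}}$ is simply connected, so Calabi's extension theorem gives holomorphic $f_k$ on the whole annulus with $e^{f(|z|^2)}=\sum_k|f_k(z)|^2$. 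You never invoke this, and the ``reproducing Hilbert space with a unitary $U(n)$-action'' is not a substitute for it.

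\emph{The ``Schoenberg-type'' step.} You leave this unproved, and it is also mis-framed. The absence of negative powers does not come from positive definiteness. It comes from holomorphy on a spherical shell in dimension $n\ge 2$, i.e.\ Hartogs' extension theorem. Each global $f_k$ extends to $\tilde f_k$, holomorphic on the ball $B_{\sqrt{r_{\sup}}}(0)$, and $F$ extends to the ball as $\sum_k|\tilde f_k|^2$. The partial sums are bounded there by the maximum principle for plurisubharmonic functions.

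Expanding each $\tilde f_k$ in its Taylor series at the origin and using radial symmetry gives $F(r)=\sum_j a_j r^j$ with $a_j=\sum_k|c_{k,J}|^2\ge 0$ directly. Convergence on $[0,r_{\sup})$ is then automatic. No $U(n)$ harmonic analysis, Laurent expansion or Pringsheim argument is needed.

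If you prefer your kernel language, the same route works. After Calabi plus Hartogs, $K(z,w)=\sum_k\tilde f_k(z)\overline{\tilde f_k(w)}$ is sesqui-holomorphic on ball $\times$ ball. For $n\ge 2$, $z\cdot\bar w$ fills the disc $|t|<r_{\sup}$, so $\Phi$ is holomorphic on that whole disc. Positivity of the kernel near $0$ then gives $a_j\binom{j}{\alpha}\ge 0$.

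\emph{Final steps.} These agree with the paper's proof and are fine once the expansion is established: Lemma~\ref{lemradiallogk} makes $\log F$ K\"ahler on the punctured ball, maximality gives $r_{\inf}=0$, and Calabi rigidity together with fullness gives infinitely many $a_j>0$.
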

	
	\begin{proof}
		Fix $r_0$ with $r_{\inf}\leq r_0<r_{\sup}$.
		Since $g$ is infinitely projectively induced for any  $p$ of the annulus $D_{r_0, r_{\sup}}$
		we can find a sequence $f_{p,k}$, for $k=1,2, \dots$, of holomorphic functions on a neighbourhood $U$ of $p$ such that 
		\[
		e^{f(|z|^2)}=e^{f(r)} \;=\; \sum_{k=1}^\infty \bigl|f_{p,k}(z)\bigr|^2.
		\]
		Since $D_{r_0, r_{\sup}}$ is simply-connected ($n\geq 2$), by Calabi's extension theorem
		we can find a sequence $f_k$, $k=1,2 \dots$ of holomorphic functions on $D_{r_0, r_{\sup}}$ extending $f_{ p,k}$ such that 
		\begin{equation}\label{efr2}
			e^{f(|z|^2)} \;=\; \sum_{k=1}^\infty \bigl|f_k(z)\bigr|^2
		\end{equation}
		and hence
		\begin{equation}\label{omegaloc}
			\omega=\frac{i}{2}\partial\bar\partial f(|z|^2)=\frac{i}{2}\partial\bar\partial\log \sum_{k=1}^\infty \bigl|f_k(z)\bigr|^2.
		\end{equation}
		By Hartogs' extension theorem we can extend each $f_k$ to a holomorphic function $\tilde f_k$
		on the open ball $B_{\sqrt{r_{\sup}}}(0)$. Consequently also $e^{f(r)}$ can be extended to a radial function 
		$F(r)$ on $B_{\sqrt{r_{\sup}}}(0)$ such that 
		\[
		F(|z|^2)=\; \sum_{k=1}^\infty \bigl|\tilde f_k(z)\bigr|^2.
		\]

		Write each \(\tilde f_k\) in its Taylor series around \(0\):
		\[
		\tilde f_k(z)
		= \sum_{|\alpha|\ge0} c_{k,\alpha}\,z^\alpha,
		\qquad
		z^\alpha = z_1^{\alpha_1}\cdots z_n^{\alpha_n}.
		\]
		Then
		\[
		\bigl|\tilde f_k(z)\bigr|^2
		= \sum_{\alpha,\beta}
		c_{k,\alpha}\,\overline{c_{k,\beta}}\;
		z^\alpha\,\overline{z}{}^{\beta},
		\]
		and hence
		\[
		\sum_{k=0}^\infty \bigl|\tilde f_k(z)\bigr|^2
		= \sum_{\alpha,\beta}
		\Bigl(\sum_k c_{k,\alpha}\,\overline{c_{k,\beta}}\Bigr)
		z^\alpha\,\overline{z}{}^{\beta}.
		\]
		But radial symmetry of the left hand side forces all mixed terms with \(\alpha\neq\beta\) to vanish and the coefficients of the terms with $\vert\alpha\vert=\vert\beta\vert$ to be proportional to the multinomial coefficients $\frac{\vert\alpha\vert!}{\alpha_1!\cdots\alpha_n!}$ . Thus writing $J=(j,0,\ldots,0)$ for $j\in \N$ so that $\vert J\vert=j$ we can rewrite the expression above as
		\begin{equation}\label{tildeF}
			F(r)
			= \sum_{j=0}^\infty
			\Bigl(\sum_{k} |c_{k,J}|^2\Bigr)\,
			|z|^{2j}
			= \sum_{j=0}^\infty a_j\,|z|^{2j}=\sum_{j=0}^{+\infty} a_jr^j, \  0\leq r< r_{\sup},
		\end{equation}
		where
		\[
		a_j = \sum_{k} |c_{k,J}|^2 \;\ge\;0.
		\]

Since $f(r)$ is a K\"ahler potential for $r\in (r_0,  r_{\sup})$, then \begin{equation}\label{forF}
		F(r)=e^{f(r)}=\sum_{j=0}^{+\infty} a_jr^j,
	\end{equation} does not vanish identically for $r\in (r_0,  r_{\sup})$ and thus at least one of the $a_j$'s in \eqref{tildeF} is positive. 
		Thus the holomorphic map
		$$\varphi :D_{0, r_{\sup}}\rightarrow \C{\rm P}^{\infty}, \ z\mapsto \left[\dots ,\ \sqrt{\frac{j!\,a_j}{\alpha!}}\,z^\alpha \, ,\dots\right]_{j\geq 0,\, |\alpha|=j}$$
		(where we used the multi-index notation
$\alpha!=\alpha_1!\cdots\alpha_n!$,
$z^\alpha=z_1^{\alpha_1}\cdots z_n^{\alpha_n}$)
is well-defined and, by \eqref{efr2}, \eqref{omegaloc} and \eqref{tildeF} its restriction   $\varphi_{|D_{r_0, r_{\sup}}}$ is a  holomorphic immersion inducing $\omega$, i.e.
		$$\varphi_{|D_{r_0, r_{\sup}}}^*\omega_{FS}=\omega_{|D_{r_0, r_{\sup}}}.$$
		Thus, by Calabi's rigidity Theorem (see \cite{calabi,LoiZeddaBook}) and since the immersion is full, infinitely many of the $a_j$'s are strictly positive.
		By  Lemma \ref{lemradiallogk} we also get $\varphi^*\omega_{FS}=\frac{i}{2}\partial\bar\partial\log F(|z|^2)$
		is a \K\ form on the punctured ball $D_{0, r_{\sup}}$ (i.e. $\varphi$ is a holomorphic immersion) extending $\omega_{|D_{r_0, r_{\sup}}}=\frac{i}{2}\partial\bar\partial f(|z|^2)_{|D_{r_0, r_{\sup}}}$. We deduce that  $f(r)=\log F(r)$ is defined for $r\in (0, r_{\sup})$, i.e. $r_{\inf}=0$. Hence \eqref{efr} is proved.
	\end{proof}

			We conclude this section providing an expansion of the function $y(r)=rf'(r)$ near $r=0$ in terms of the first non-vanishing terms in the expansion of $F$ obtained in Theorem \ref{lemmaimp}, which will be used in the next section.

	\begin{cor}\label{lem:y_expansion}
    Let $g$ be a radial K\"ahler metric induced by a full K\"ahler immersion in $\mathds C{\rm P}^\infty$. Then as \(r\to0\) the function $y(r)$ associated to $g$ admits the expansion:
    \begin{equation}\label{eqexpansiony}
		y(r)=k + K\,r^l + o(r^l),
		\end{equation}
        where $k\in\mathds Z$, $k\geq 0$, and $K>0$.
	\end{cor}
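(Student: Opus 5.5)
By Theorem \ref{lemmaimp}, $F(r)=e^{f(r)}=\sum_{j\ge0}a_jr^j$ on $0\le r<r_{\sup}$. Here $a_j\ge0$ and infinitely many $a_j$ are strictly positive. The plan is to compute $y=rf'=rF'/F$ directly from this power series near $r=0$.

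Let $k\ge0$ be the smallest index with $a_k>0$. This $k$ exists because infinitely many coefficients are positive. Then
\[
F(r)=r^k\Bigl(a_k+\sum_{j>k}a_jr^{j-k}\Bigr)=r^k G(r),\qquad G(0)=a_k>0,
\]
with $G$ real analytic near $0$. Hence near $0$
\[
y(r)=\frac{rF'(r)}{F(r)}=k+\frac{rG'(r)}{G(r)} .
\]
So $y$ extends real analytically to $r=0$ with $y(0)=k\in\mathds Z$, $k\ge0$.

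Next let $m>k$ be the second smallest index with $a_m>0$. Again this exists because infinitely many $a_j$ are positive. Set $l=m-k\ge1$. Then $G(r)=a_k+a_mr^l+O(r^{l+1})$ and $rG'(r)=l\,a_m r^l+O(r^{l+1})$. Since $G(0)=a_k$, this gives
\[
y(r)=k+\frac{l\,a_m}{a_k}\,r^l+O(r^{l+1}).
\]
This is \eqref{eqexpansiony} with $K=l\,a_m/a_k>0$.

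There is no serious obstacle. The one point to check is that the coefficient $K$ is genuinely strictly positive and that no cancellation occurs. This is immediate here, because only the two lowest-order positive coefficients enter the $r^l$ term, and $a_j\ge0$ ensures that nothing between them contributes.

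Alternatively, the positivity of $K$ is consistent with the identity from Lemma \ref{lemradiallogk}: $y'=f'+rf''=\frac{1}{2F^2}\sum(k-j)^2a_ja_kr^{j+k-1}>0$. Its leading term is of order $r^{l-1}$, with a positive coefficient.
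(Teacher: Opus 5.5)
Your proof is correct and is essentially the paper's argument: you take the first two indices $k<k+l$ with positive coefficients in $F(r)=\sum_j a_jr^j$ from Theorem~\ref{lemmaimp} and expand $y=rF'/F$ near $0$, which gives $K=l\,a_{k+l}/a_k>0$. Factoring $F=r^kG$ with $G(0)=a_k$ is just a tidier way to organize the same quotient expansion, and since $G$ is real analytic at $0$ it also yields the slightly sharper remainder $O(r^{l+1})$.
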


	\begin{proof}
		Since the metric is infinitely projectively induced, by Theorem \ref{lemmaimp} we can write the following expansion for the function $F(r)=e^{f(r)}$:
        \[
		F(r)=\sum_{j=0}^\infty a_jr^j,
		\]
        where the $a_j$'s are all non-negative and infinitely many of them are positive. Denote by $k$ and $l+k$ the indices of the first two non-vanishing coefficients, that is:
        \[
		F(r)=a_k\,r^k + a_{k+l}\,r^{k+l} + o(r^{k+l}).
		\]
        By \eqref{y(r)}, $y(r)=rf'(r)$, and thus 
		\[
		\begin{aligned}
			y(r)
			&=\frac{r\,F'(r)}{F(r)}
			=\frac{k\,a_k + (k+l)\,a_{k+l}\,r^l + o(r^l)}
			{a_k + a_{k+l}\,r^l + o(r^l)}=\frac{k + (k+l)\frac{a_{k+l}}{a_k}\,r^l + o(r^l)}
			{1 + \frac{a_{k+l}}{a_k}\,r^l + o(r^l)}.
		\end{aligned}
		\]   
		Setting $u=  (k+l)\frac{a_{k+l}}{a_k}\,r^l + o(r^l)$ and $v=\frac{a_{k+l}}{a_k}\,r^l + o(r^l)$ by the standard expansion \(\frac{k+u}{1+v}=k+(u-kv)+o(r^l)\), we get
		\[
		y(r)
		= k+(u-kv)+o(r^l).
		\]
		Noting also that:
		\[
		u-kv = l\frac{a_{k+l}}{a_k}\,r^l + o(r^l),
		\]
		we conclude
		\[
		y(r)=k + K\,r^l + o(r^l),
		\]
        for $K = l\,\frac{a_{k+l}}{a_k}>0$.
	\end{proof}

	\begin{rmk}\label{rem1}\rm 
Observe that the condition that the metric is projectively induced cannot be dropped, not even if the metric is real analytic and $e^{f(r)}$ extends to the whole ball. In fact, consider the classical Burns--Simanca metric $g_{BS}=g_{1,BS}$, whose K\"ahler potential reads $f(r)=r+\log(r)$ defined on $\mathds C^2\setminus \{0\}$. A multiple $c g_{BS}$ of the Burns--Simanca metric (see e.g. \cite{simreg}) is infinitely projectively induced if and only if $c\in\mathds Z^+$. For non integral values of $c$, $c g_{BS}$ is real analytic and the exponential of its potential reads:
$$
F(r)=e^{cf(r)}=r^ce^{cr}=r^c \sum_ {j = 0}^\infty \frac{c^jr^j}{j!},
$$
  that cannot be written in the form $F(r)=\sum_{j=0}^\infty a_jr^j$.
    \end{rmk}

	\section{Radial projectively induced extremal metrics and the cscK classification}\label{sec:classification}
	We begin with a preliminary analysis of radial extremal Kähler metrics, which will also yield part \eqref{PartExtremal} of \Cref{thmNew}. We first recall the following standard characterization; see, for instance, \cite{Abreu,XD,LSZext}.
	
	\begin{theor}\label{lemmasimple}
		A radial \K\ metric  $g$ is extremal if and only if 
		\begin{equation}\label{ypsi}
			ry' = y - \frac{A}{y^{n-1}} - \frac{B}{y^{n-2}} - C y^2 - D y^3. 
		\end{equation}
		for some $A,B,C,D \in \R$.
		Moreover, the following facts hold true:
		\begin{itemize}
			\item [(a)]
			$g$ is a cscK metric\footnote{with constant scalar curvature equal to  $Cn(n+1)$.} if and only if  $D=0$ and the sign of 
			the scalar curvature $S$ is equal to the sign of $C$;
			\item [(b)]
			$g$ is a KE metric with Einstein constant $\lambda$ if and only if   $B=D=0$  and $C = \frac{\lambda}{2(n+1)}$;
			\item [(c)]
			$g$ has constant holomorphic sectional curvature 
			if and only if  $A=B=D=0$.
		\end{itemize}
	\end{theor}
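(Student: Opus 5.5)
The characterization of radial extremal metrics is a computation in the single variable $r$, or better in $t=\log r$. Along radial curves $r\,\frac{d}{dr}=\frac{d}{dt}$, so $ry'=\dot y$. The plan has three steps: express the scalar curvature and the Hessian of a radial function through $y$ and $\dot y$, impose the extremality condition, and integrate the resulting ODE.

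\emph{Step 1: scalar curvature.} Using \eqref{metric}, $\det(g_{i\bar j})=f'(r)^{n-1}(rf')'=\frac{y^{n-1}\,y'}{r^{n-1}}$. Hence the Ricci potential is $-\log\bigl(y^{n-1}y'/r^{n-1}\bigr)$, which up to pluriharmonic terms equals $-\log(y^{n-1}\dot y)+n\log r$. Write $\phi=y^{n-1}\dot y$. For a radial function $h(t)$, the trace $g^{i\bar j}\partial_i\partial_{\bar j}h$ computed with \eqref{metric} reduces to $\frac{1}{y^{n-1}}\frac{d}{dt}\bigl(\frac{y^{n-1}\dot h}{\dot y}\bigr)$, up to a factor $\frac{1}{\dot y}$ that I will keep track of carefully. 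The cleanest route is the standard Abreu/momentum substitution. Since $\dot y>0$, the map $t\mapsto y$ is a diffeomorphism, so $y$ serves as the moment coordinate. Set $\psi(y)=\dot y$, which is the quantity $ry'$ viewed as a function of $y$. In this coordinate one gets the known formula
\[
S=-\frac{c}{y^{n-1}}\frac{d^2}{dy^2}\bigl(y^{n-1}\psi(y)\bigr)+\frac{c'}{y}\cdot(\text{lower order}),
\]
with explicit constants $c,c'$. More precisely, $S\,y^{n-1}=-\frac{d^2}{dy^2}\bigl(y^{n-1}\psi\bigr)+n(n-1)y^{n-2}$ up to normalization. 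I will derive this by writing $\omega$ in the coordinates $(y,\text{angles})$.

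\emph{Step 2: extremality.} The metric $g$ is extremal iff $\nabla^{1,0}S$ is holomorphic. For a radial $S$ this holds iff $S$ is an affine function of the moment map, that is, $S=\alpha+\beta y$. One direction comes from the fact that the Hamiltonian of a holomorphic Killing field commuting with the $U(n)$-action is a multiple of $y$ plus a constant. The other direction follows from the Hessian formula of Step 1: $\partial\bar\partial$ of $S(y)$ being tangent to holomorphic fields forces $S''(y)=0$. Substituting $S=\alpha+\beta y$ into the Step 1 formula gives
\[
\frac{d^2}{dy^2}\bigl(y^{n-1}\psi\bigr)=n(n-1)y^{n-2}-(\alpha+\beta y)y^{n-1}.
\]
Integrating twice yields $y^{n-1}\psi=y^n-A-By-Cy^{n+1}-Dy^{n+2}$, and dividing by $y^{n-1}$ gives \eqref{ypsi}. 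The integration constants are $A$ and $B$, while $C$ and $D$ are proportional to $\alpha$ and $\beta$. Matching coefficients gives $S=Cn(n+1)+(\text{const})\,Dy$, which proves (a): $D=0$ is equivalent to constant $S$, and $S$ has the sign of $C$.

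\emph{Step 3: (b) and (c).} For (b), compute $\rho=-\frac{i}{2}\partial\bar\partial\log(y^{n-1}\dot y\,r^{-(n-1)})$. The KE condition $\rho=\lambda\omega$ is the statement that the radial potentials agree up to a pluriharmonic term, which in radial form is $a+b\log r$. Differentiating in $t$ once gives $-\frac{d}{dt}\log\phi+n=\lambda y+b'$. Comparing with the derivative of \eqref{ypsi} then forces $B=D=0$ and $C=\lambda/(2(n+1))$.

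For (c), when $A=B=D=0$ the ODE $\dot y=y-Cy^2$ integrates to $y=\frac{ar}{1+aCr}$. This is $f=\frac{1}{C}\log(1+aCr)$, a Fubini--Study or hyperbolic potential, or the flat one when $C=0$. Conversely, constant holomorphic sectional curvature implies KE, so $B=D=0$, and the explicit form then forces $A=0$.

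The main obstacle is Step 1, namely getting the scalar curvature formula in the moment coordinate with the correct constants. I would verify it against the Fubini--Study case, with $\psi=y-y^2$, and the flat case, with $\psi=y$, to fix the normalizations.
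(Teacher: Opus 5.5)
The paper gives no proof of this statement; it quotes it as a standard characterization (Abreu, \dots). Your moment-coordinate derivation is the standard route. It is correct for the main equivalence, for (a), for (b), and for the implication $A=B=D=0\Rightarrow$ constant holomorphic sectional curvature. Your formula $S=-\Phi''/y^{n-1}+n(n-1)/y$ with $\Phi=y^{n-1}\psi$ is right, and integrating an affine $S$ twice gives exactly \eqref{ypsi}, with $S=Cn(n+1)+(n+1)(n+2)Dy$.

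\textbf{Gap in the converse of (c).} After ``constant holomorphic sectional curvature $\Rightarrow$ KE $\Rightarrow B=D=0$'' you say ``the explicit form then forces $A=0$''. No explicit form is available at that point. For $A\neq0$ the solutions of $\dot y=y-Ay^{1-n}-Cy^2$ are genuine radial KE metrics; for $n=2$, $C=0$, $A>0$ one gets $y=\sqrt{A+ar^2}$, the Eguchi--Hanson metric. The only curvature quantities your argument computes, Ricci and scalar curvature, cannot distinguish these from space forms, so you need one piece of the full curvature tensor.

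The cheapest route is the punctured line $\C e_1$. It is the fixed-point set of the isometries $\{1\}\times U(n-1)$, hence a totally geodesic complex curve. Its Gaussian curvature therefore equals the holomorphic sectional curvature of $g$ in the radial direction. The induced metric is $y'(|z_1|^2)\,|dz_1|^2$. Since $y'=\psi(y)/r$ and $\log|z_1|^2$ is harmonic off the origin, $\partial_{z_1}\partial_{\bar z_1}\log y'=\partial_{z_1}\partial_{\bar z_1}\log\psi(y)=\psi''(y)\,y'$. So the radial holomorphic sectional curvature is a positive multiple of $-\psi''(y)$. With $B=D=0$ this equals $An(n-1)y^{-n-1}+2C$. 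Since $y$ is strictly monotone and $n\geq 2$, this is constant only if $A=0$.

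\textbf{Smaller points.}
\begin{enumerate}
\item In Step 2, both of your justifications prove ``extremal $\Rightarrow$ $S$ affine in $y$''; the converse is never addressed. Both directions follow at once from \eqref{metric}: $g^{i\bar j}z_j=z_i/y'$, so for radial $h$ one has $\nabla^{1,0}h=\frac{dh}{dy}\sum_i z_i\partial_i$. This is holomorphic iff $dh/dy$ is constant.
\item In (b), radial pluriharmonic functions on a domain of $\C^n$ with $n\geq2$ are constant. Indeed $\frac{i}{2}\partial\bar\partial\log r$ is the pull-back of the Fubini--Study form of $\CP^{n-1}$, not zero, so the term $b\log r$ should not appear. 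It is harmless only because matching coefficients with \eqref{ypsi} forces $b'=0$.
\item With your own identity $-\frac{d}{dt}\log\phi+n=\lambda y$, the comparison gives $C=\lambda/(n+1)$, not $\lambda/(2(n+1))$. The factor $2$ depends on how $\rho$ is normalized relative to $\omega$ (e.g.\ $\rho=-i\partial\bar\partial\log\det g$ against $\omega=\frac{i}{2}\partial\bar\partial f$). Fix one convention and carry it through.
\end{enumerate}
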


	The following lemma gives the boundary behaviour of the function \(y(r)\) for radial extremal K\"ahler metrics which are infinitely projectively induced.
		\begin{lem}\label{prelemmagood}
		Let $g$ be a radial extremal K\"ahler metric. Assume that $g$ is infinitely projectively induced. Then $y(r)\rightarrow +\infty$ as $r\rightarrow r_{\rm sup}^-$.
			\end{lem}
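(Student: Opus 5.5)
The plan is to use that $y$ is monotone and then split according to whether $r_{\sup}$ is finite or infinite. Since $g$ is a metric, $y>0$ and $ry'>0$ on $(0,r_{\sup})$; here $r_{\inf}=0$ by Theorem \ref{lemmaimp}. So $y$ is strictly increasing and has a limit $L\in(0,+\infty]$ as $r\to r_{\sup}^-$. I argue by contradiction, assuming $L<+\infty$. Then for any fixed $r_0\in(0,r_{\sup})$, the values of $y$ on $[r_0,r_{\sup})$ lie in the compact interval $[y(r_0),L]\subset(0,+\infty)$.

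\emph{Case $r_{\sup}=+\infty$.} Here I use only the expansion of Theorem \ref{lemmaimp}, namely $F(r)=\sum_j a_jr^j$ with infinitely many $a_j>0$. Then
$$y(r)=\frac{rF'(r)}{F(r)}=\frac{\sum_j j\,a_jr^j}{\sum_j a_jr^j},$$
which is the mean of $j$ with respect to the weights $a_jr^j$. Fix $N$ and choose $m>2N$ with $a_m>0$. As $r\to\infty$, the finitely many terms with $j\le N$ satisfy $\sum_{j\le N}a_jr^j=o(a_mr^m)$, so the weight of $\{j\le N\}$ tends to $0$. Hence $\liminf_{r\to\infty} y(r)\ge N$. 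Since $N$ is arbitrary, $L=+\infty$, a contradiction. Extremality is not needed in this case.

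\emph{Case $r_{\sup}<+\infty$.} Here I use the extremal equation \eqref{ypsi} of Theorem \ref{lemmasimple}. In the variable $t=\log r$ it becomes the autonomous ODE
$$\frac{dy}{dt}=P(y),\qquad P(y)=y-\frac{A}{y^{n-1}}-\frac{B}{y^{n-2}}-Cy^2-Dy^3,$$
where $P$ is smooth (indeed real analytic) on $(0,+\infty)$. The solution stays in the compact set $[y(r_0),L]$ as $t\to\log r_{\sup}<\infty$, so by standard ODE theory it extends to a solution on $(\log r_0,\log r_{\sup}+\varepsilon)$.

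Next I check $P(L)>0$. Since $y'>0$ we have $P\ge0$ along the solution, so $P(L)\ge0$. If $P(L)=0$, then $y\equiv L$ would be a stationary solution, and by uniqueness for the ODE the constant $L$ cannot be reached in finite time $t$. This contradicts $y\to L$ at the finite time $\log r_{\sup}$ (and indeed $y<L$ before it). So $P(L)>0$, and by continuity $y>0$ and $ry'=P(y)>0$ on a slightly larger interval.

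Now set $f(r)=f(r_0)+\int_{r_0}^r y(s)/s\,ds$ using the extended $y$. This gives a radial potential on $D_{0,r_{\sup}+\delta}$ that agrees with the original $f$ and still satisfies the Kähler conditions. The extended metric is still extremal, since it satisfies \eqref{ypsi}. This contradicts the maximality of $(r_{\inf},r_{\sup})$, hence $L=+\infty$.

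The main obstacle is the finite case. There I must make sure that ``maximal interval of definition'' is taken to mean maximal among radial potentials defining an (extremal) Kähler metric, and I must rule out the degenerate scenario $P(L)=0$, which is exactly what the uniqueness argument for the autonomous ODE handles. An alternative for that case would be to use $r_{\sup}\le R$, the radius of convergence of $F$, but the ODE continuation argument seems cleaner.
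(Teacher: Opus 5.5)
Your proof is correct and follows the paper's argument essentially step for step. You use the same split: for $r_{\sup}=+\infty$, the infinitely many positive coefficients of $F$ make the high-order tail dominate, with no use of extremality; for $r_{\sup}<+\infty$, you continue the autonomous ODE in $\log r$ past $\log r_{\sup}$, rule out $P(L)=0$ by uniqueness, and extend $f$ to contradict maximality. The only differences are cosmetic: you phrase the infinite case as a weighted mean of $j$, and your requirement $m>2N$ is stronger than needed, since any $m>N$ with $a_m>0$ suffices.
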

\begin{proof}
By Theorem \ref{lemmaimp} we have \(r_{\inf}=0\) and
\begin{equation}\label{altraF}
F(r)=e^{f(r)}=\sum_{j=0}^{+\infty}a_j r^j,\qquad 0\leq r<r_{\sup},
\end{equation}
where \(a_j\geq 0\) and infinitely many of the \(a_j\)'s are strictly positive.

Since \(g\) is extremal, by Theorem \ref{lemmasimple} the function \(y(r)=rf'(r)\) satisfies the ODE \eqref{ypsi}.
Moreover, $y$ is strictly increasing, since $ry'(r)>0$ on \((0,r_{\sup})\). 

 Suppose first that \(r_{\sup}<+\infty\). Since \(y\) is positive and increasing, the limit
\[
L:=\lim_{r\to r_{\sup}^{-}}y(r)
\]
exists in \((0,+\infty]\). Assume by contradiction that \(L<+\infty\). Set
\[
s=\log r,\qquad S=\log r_{\sup}.
\]
Then \(y\), as a function of \(s\), satisfies the analytic ODE
\[
\frac{dy}{ds}=G(y),
\qquad
G(Y):=Y-\frac{A}{Y^{n-1}}-\frac{B}{Y^{n-2}}-CY^2-DY^3 .
\]
Since \(L>0\), the function \(G\) is analytic in a neighbourhood of \(L\), because its only
possible singularity is at $Y=0$. By the standard
continuation theorem for analytic ODEs, the solution $y(s)$
extends analytically past $s=S$. Moreover, since \(G(y)>0\) for \(s<S\), we have
\(G(L)\geq 0\). If $G(L)=0$, then the constant function $y\equiv L$ is a local solution with initial value $L$ at $s=S$. By uniqueness for ODEs, the
extended solution would be identically equal to $L$ near $s=S$, contradicting the fact
that $y$ is strictly increasing on the left of $S$. Thus, $G(L)>0$.

Therefore, after possibly shrinking the neighbourhood of \(S\), the extension, that we still denote by $y$, still satisfies $y(s)>0$ and $\frac{dy}{ds}>0$, or equivalently, in the variable $r$, $y(r)>0$, $ry'(r)>0$. It follows that we can extend also the real analytic radial K\"ahler potential $f(r)$ beyond $r_{\sup}$, by setting for a fixed $r_0\in(0,r_{\sup})$ 
$$
\widetilde f(r)=f(r_0)+\int_{r_0}^{r}\frac{ y(t)}{t}\,dt .
$$
This contradicts the maximality of \((0,r_{\sup})\). We have proved so far that if
\(r_{\sup}<+\infty\), then
\[
\lim_{r\to r_{\sup}^{-}}y(r)=+\infty.
\]

Suppose now that \(r_{\sup}=+\infty\) and fix
\(M>0\). Since infinitely many of the coefficients \(a_j\) in \eqref{altraF} are strictly positive, we can
choose \(J>M\) such that \(a_J>0\). Then
\[
F(r)=\sum_{j=0}^{J-1}a_j r^j+\sum_{j=J}^{+\infty}a_j r^j,
\]
and
\[
\sum_{j=0}^{J-1}a_j r^j=O(r^{J-1}),
\qquad
\sum_{j=J}^{+\infty}a_j r^j\geq a_J r^J.
\]
Hence
\[
\frac{\sum_{j=J}^{+\infty}a_j r^j}{\sum_{j=0}^{+\infty}a_j r^j}
\longrightarrow 1
\qquad\text{as } r\to+\infty.
\]
Therefore
\[
y(r)
=
\frac{\sum_{j=1}^{+\infty}j a_j r^j}{\sum_{j=0}^{+\infty}a_j r^j}
\geq
J
\frac{\sum_{j=J}^{+\infty}a_j r^j}{\sum_{j=0}^{+\infty}a_j r^j}.
\]
It follows that
\[
\liminf_{r\to+\infty} y(r)\geq J>M.
\]
Since \(M>0\) is arbitrary, we get \(y(r)\to+\infty\) as \(r\to+\infty\).
\end{proof}

	We are now ready to prove \eqref{PartExtremal} of \Cref{thmNew},

\begin{proof}[Proof of \eqref{PartExtremal} of \Cref{thmNew}]
If $g$ is finitely projectively induced, then \cite[Theorem~1.1]{LSZext} implies that $g$ has positive constant holomorphic sectional curvature. Hence, up to a linear change of coordinates, $g$ is a positive integer multiple of the Fubini--Study metric. In particular, $g$ is cscK with positive scalar curvature and $r_{\mathrm{sup}}=+\infty$.

Assume therefore that $g$ is infinitely projectively induced. Suppose first that
$r_{\mathrm{sup}}=+\infty$.
By Lemma \ref{prelemmagood},
\[
y(r)\longrightarrow+\infty
\qquad\text{as }r\to+\infty.
\]
Since $g$ is radial and extremal, Theorem \ref{lemmasimple} gives
\[
ry'
=
y-\frac{A}{y^{n-1}}-\frac{B}{y^{n-2}}-Cy^2-Dy^3
\]
for some $A,B,C,D\in\R$.
Since $ry'(r)>0$, the extremal equation implies $D\leq0$. Assume by contradiction that $D<0$. Then, for $y$ sufficiently large, there exists $\delta>0$ such that
\[
ry'\geq\delta y^3.
\]
Since $y$ is strictly increasing, we may invert it and write
\[
\frac{d}{dy}\log r
=
\frac{1}{ry'}
\leq
\frac{1}{\delta y^3}.
\]
Integrating from a sufficiently large $Y$ to $y$, we obtain
\[
\log r(y)-\log r(Y)
\leq
\frac{1}{2\delta}
\left(\frac{1}{Y^2}-\frac{1}{y^2}\right).
\]
Thus $r(y)$ remains bounded as $y\to+\infty$, contradicting $r_{\mathrm{sup}}=+\infty$. Hence $D=0$, and therefore $g$ is cscK.
The equation now becomes
\[
ry'
=
y-\frac{A}{y^{n-1}}-\frac{B}{y^{n-2}}-Cy^2.
\]
Since $ry'(r)>0$ and $y(r)\to+\infty$, we must have $C\leq0$. Assume that $C<0$. Then, for $y$ sufficiently large, there exists $\delta>0$ such that
$ry'\geq\delta y^2$.
Arguing as above,
\[
\frac{d}{dy}\log r
=
\frac{1}{ry'}
\leq
\frac{1}{\delta y^2},
\]
so $r(y)$ remains bounded as $y\to+\infty$, again contradicting $r_{\mathrm{sup}}=+\infty$. Therefore $C=0$, and $g$ is scalar-flat.

Conversely, suppose that $g$ is cscK with non-negative scalar curvature. Then $D=0$ and $C\geq0$. By Lemma \ref{prelemmagood},
\[
y(r)\longrightarrow+\infty
\qquad\text{as }r\to r_{\mathrm{sup}}^-.
\]
Since $ry'(r)>0$, the equation
\[
ry'
=
y-\frac{A}{y^{n-1}}-\frac{B}{y^{n-2}}-Cy^2
\]
forces $C\leq0$. Hence $C=0$, so $g$ is scalar-flat. Thus
\[
ry'
=
y-\frac{A}{y^{n-1}}-\frac{B}{y^{n-2}}.
\]
Since $y$ is strictly increasing and tends to $+\infty$, it admits an inverse $r=r(y)$ for $y$ large, and
\[
\frac{dr}{dy}
=
\frac{r}{y-Ay^{1-n}-By^{2-n}}.
\]
Since $n\geq2$, there exist $Y_0>0$ and $C_0>0$ such that
\[
y-Ay^{1-n}-By^{2-n}\leq y+C_0
\]
for all $y\geq Y_0$. Hence
\[
\frac{1}{r}\frac{dr}{dy}
\geq
\frac{1}{y+C_0}.
\]
Integrating from $Y_0$ to $y$ gives
\[
\log\frac{r(y)}{r(Y_0)}
\geq
\log\frac{y+C_0}{Y_0+C_0},
\]
and therefore
\[
r(y)\geq r(Y_0)\frac{y+C_0}{Y_0+C_0}.
\]
Letting $y\to+\infty$, we obtain $r(y)\to+\infty$, and hence $r_{\mathrm{sup}}=+\infty$.

Therefore, in the infinitely projectively induced case,
$r_{\mathrm{sup}}=+\infty$ if and only if 
$g$ is scalar-flat.
Combining this with the finitely projectively induced case proves
$r_{\mathrm{sup}}=+\infty$ if and only if 
$g$ is cscK with non-negative scalar curvature.
\end{proof}

We next record a consequence of part \eqref{PartExtremal} of \Cref{thmNew} that will be used in the classification of the cscK case.
\begin{cor}\label{lemmagood}
Let $g$ be a radial cscK metric with non-negative scalar curvature and assume that $g$ is infinitely projectively induced. Then the function $y(r)=rf'(r)$ satisfies
\begin{equation}\label{ypsinew}
r\,y'(r)
=
y(r)
-\frac{A}{y(r)^{\,n-1}}
-\frac{B}{y(r)^{\,n-2}}
\end{equation}
for some $A,B\in\R$. Moreover, $y(r)=rf'(r)$ extends to a meromorphic function $y(\xi)$ on $\C$ which is holomorphic on a neighborhood of the non-negative real half-line.
\end{cor}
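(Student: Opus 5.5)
The first assertion is immediate from what has already been proved. Since $g$ is cscK, Theorem~\ref{lemmasimple}(a) gives $D=0$ and $\operatorname{sign}(C)=\operatorname{sign}(S)$, so $C\geq0$. The proof of part~\eqref{PartExtremal} of \Cref{thmNew} in the infinitely projectively induced case, which rests on Lemma~\ref{prelemmagood} and $y\to+\infty$ together with $ry'>0$, then forces $C=0$. This yields \eqref{ypsinew}. It also gives $r_{\sup}=+\infty$, so $y$ is a real analytic function on $(0,+\infty)$.

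For the meromorphic extension, I would use the expansion of Theorem~\ref{lemmaimp}, $F(r)=e^{f(r)}=\sum_j a_jr^j$ with $a_j\geq 0$. This series converges for $0\le r<r_{\sup}=+\infty$. Because all coefficients are non-negative, convergence at every positive real $r$ implies convergence for every $|\xi|<r$, so $F$ extends to an entire function $F(\xi)$ on $\C$. Then
\[
y(\xi)=\frac{\xi F'(\xi)}{F(\xi)}
\]
is meromorphic on $\C$, since $F\not\equiv0$, and it agrees with $rf'(r)$ on $(0,+\infty)$ by \eqref{y(r)}.

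It remains to show holomorphy near $[0,+\infty)$. At a point $r>0$ we have $F(r)=\sum a_jr^j>0$, since some $a_j>0$. By continuity $F$ is nonvanishing on a neighbourhood of each such point, so $y$ is holomorphic there. At $\xi=0$, Corollary~\ref{lem:y_expansion} identifies the behaviour: write $F(\xi)=\xi^k G(\xi)$ with $G$ entire and $G(0)=a_k>0$. Then
\[
y(\xi)=k+\frac{\xi G'(\xi)}{G(\xi)},
\]
which is holomorphic near $0$. Taking the union of these open neighbourhoods gives an open set containing the closed half-line on which $y$ is holomorphic.

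The main point requiring care is the entire extension of $F$, that is, passing from convergence on the real half-line to convergence on all of $\C$. This follows from the non-negativity of the $a_j$ via the comparison $|a_j\xi^j|\le a_j|\xi|^j$. Apart from this, I expect no real obstacle; the rest is bookkeeping of zeros of $F$. These zeros lie off the non-negative real axis, except for the one at the origin, which is removable for $y$.
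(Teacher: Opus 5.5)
Your proposal is correct and follows essentially the same route as the paper: part \eqref{PartExtremal} of \Cref{thmNew} gives $C=D=0$ and $r_{\sup}=+\infty$, the series for $F$ from Theorem~\ref{lemmaimp} then defines an entire function, and $y=\xi F'/F$ is holomorphic near $[0,+\infty)$ because $F>0$ on $(0,+\infty)$ and $F=\xi^k G$ with $G(0)=a_k\neq 0$ at the origin. Incidentally, the non-negativity of the $a_j$ is not needed for the entire extension, since any power series converging at every positive real already has infinite radius of convergence by Abel's lemma.
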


\begin{proof}
By part \eqref{PartExtremal} of \Cref{thmNew}, $g$ is scalar-flat and
$r_{\mathrm{sup}}=+\infty$. Hence Theorem \ref{lemmasimple} gives
$C=D=0$, and therefore
\[
r\,y'
=
y-\frac{A}{y^{n-1}}-\frac{B}{y^{n-2}},
\]
which is \eqref{ypsinew}.

Moreover, Theorem \ref{lemmaimp} gives
\[
F(r)=e^{f(r)}=\sum_{j=0}^{+\infty}a_jr^j
\]
with infinite radius of convergence, since $r_{\mathrm{sup}}=+\infty$.
Hence $F$ defines an entire function on $\C$, and
\[
y(\xi)=\xi\frac{F'(\xi)}{F(\xi)}
\]
is meromorphic on $\C$.

It remains to observe that $y$ is holomorphic in a neighborhood of the non-negative real half-line. Indeed, $F(r)>0$ for every $r>0$. At $r=0$, if $k$ is the first index such that $a_k>0$, then
\[
F(\xi)=\xi^kH(\xi),
\qquad
H(0)=a_k\neq0,
\]
and therefore
\[
\xi\frac{F'(\xi)}{F(\xi)}
=
k+\xi\frac{H'(\xi)}{H(\xi)},
\]
which is holomorphic near $\xi=0$.
\end{proof}

We will now piece together Corollary \ref{lem:y_expansion} and Corollary \ref{lemmagood} to compute the coefficients of the ODE \eqref{ypsinew}.

	\begin{lem}\label{lem:compute_AB}
    Let $g$ be a radial cscK metric with non-negative scalar curvature and assume that  $g$ is infinitely projectively induced. Then the function $y(r)$ associated to $g$ satisfies the ODE \eqref{ypsinew} with
		\[
		A=(l+1-n)\,k^n,\quad B=(n-l)\,k^{\,n-1}.
		\]
	\end{lem}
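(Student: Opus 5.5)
Substitute the expansion of Corollary \ref{lem:y_expansion}, $y(r)=k+Kr^l+o(r^l)$ with $k\ge 0$ an integer and $K>0$, into the ODE \eqref{ypsinew} of Corollary \ref{lemmagood} and match the first two orders as $r\to0$. Since $y$ is real analytic at $r=0$ by Corollary \ref{lemmagood}, the remainder is a genuine power series, so the derivative satisfies $ry'(r)=lKr^l+o(r^l)$. The left-hand side therefore has vanishing constant term and coefficient $lK$ in front of $r^l$.

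First rule out $k=0$. If $k=0$, then $y(r)\to0$ as $r\to0$. Multiply \eqref{ypsinew} by $y^{n-1}$ to get
\[
y^{n-1}\,ry'=y^n-A-By.
\]
As $r\to 0$, the left side and $y^n$ tend to $0$, which forces $A=0$. Then $y^{n-2}\,ry'=y^{n-1}-B$, and for $n\ge 3$ the same reasoning gives $B=0$. The ODE becomes $ry'=y$, so $y=cr$ and $f=cr+\mathrm{const}$, which is the flat metric. For $n=2$, the equation reads $ry'=y-B$, and $y\to0$ with $ry'\to0$ again gives $B=0$.

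In the flat case the expansion is $y=cr$, so $k=0$ and $l=1$. The claimed formulas then give $A=(2-n)\cdot0^n=0$ and $B=(n-1)\cdot0^{n-1}=0$ for $n\ge2$, which is consistent. So the case $k=0$ is compatible with the statement, and from now on I assume $k>0$.

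For $k>0$, expand the right-hand side of \eqref{ypsinew} with $\varepsilon=Kr^l+o(r^l)$:
\[
y-\frac{A}{y^{n-1}}-\frac{B}{y^{n-2}}
=\Bigl(k-\frac{A}{k^{n-1}}-\frac{B}{k^{n-2}}\Bigr)
+\Bigl(1+\frac{(n-1)A}{k^n}+\frac{(n-2)B}{k^{n-1}}\Bigr)\varepsilon+o(\varepsilon).
\]
Equating with $ry'=lK r^l+o(r^l)$ and dividing the $r^l$-coefficient by $K>0$ gives the linear system
\[
k^n=A+kB,\qquad (n-1)A+(n-2)kB=(l-1)k^n.
\]
Solve by substituting $kB=k^n-A$ into the second equation. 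This gives $A+(n-2)k^n=(l-1)k^n$, so $A=(l+1-n)k^n$. Then $kB=(n-l)k^n$, which yields $B=(n-l)k^{n-1}$, as claimed.

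The main obstacle is justifying the differentiation of the $o(r^l)$ term, which is handled by the analyticity of $y$ at $0$ from Corollary \ref{lemmagood}. A second point is ensuring that $l$ is the same in both expansions. Here $l$ is the gap between the first two nonzero coefficients of $F$, and Corollary \ref{lem:y_expansion} shows that $r^l$ is exactly the leading nonconstant term of $y$.
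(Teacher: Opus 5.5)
Your proof is correct and follows essentially the same route as the paper: both match the constant term and the $r^l$-coefficient of the ODE \eqref{ypsinew} at $r=0$, and both use analyticity of $y$ at $0$ to differentiate the expansion. The only difference is packaging: the paper writes the ODE as $H:=(y-ry')y^{n-1}=A+By$ and gets $B$ from $\lim H'/y'$ via l'H\^opital, which never divides by $k$ and so covers $k=0$ without the separate case you treat.
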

	
	\begin{proof}
		Using \eqref{eqexpansiony} in Corollary \ref{lem:y_expansion} write $y(r)=k+Kr^l+o(r^l)$, and define
		\[
		H(r)\;:=\;(y - r\,y')\,y^{\,n-1},
		\]
		so that the ODE \eqref{ypsinew}, that $y$ satisfies due to Corollary \ref{lemmagood}, is equivalent to
        \[
        H(r)=A+B\,y(r).
        \]
		
		Observe first that from the expansion $y(r)=k+Kr^l+o(r^l)$, it follows that \(y(r)\to k\) and \(r\,y'(r)\to0\) when \(r\to0\). Thus
		\[
		H(r)=(y - r\,y')\,y^{\,n-1}\;\longrightarrow\;k\cdot k^{\,n-1}=k^n.
		\]
		Since \(H(r)=A+B\,y(r)\to A+B\,k\), we get
		\begin{equation}\label{eqAB1}
			A + B\,k = k^n.
		\end{equation}
		
		Now observe that from \(H(r)=A+B\,y(r)\), we have
		\[
		B
		=\lim_{r\to0}\frac{H(r)-H(0)}{y(r)-y(0)}
		=\lim_{r\to0}\frac{H(r)-k^n}{y(r)-k},
		\]
		and since numerator and denominator vanish, by l'Hopital's rule
		\[
		B
		=\lim_{r\to0}\frac{H'(r)}{y'(r)}.
		\]
		A short computation yields
		\[
		\frac{H'(r)}{y'(r)}
		=(n-1)y^{n-1}
		-r\,\frac{y''}{y'}\,y^{n-1}
		-(n-1)r\,y'\,y^{n-2},
		\]
        and, since $r\,y'(r)=K\,l\,r^l+o(r^l)\to 0$ and 
		\[
		\frac{r\,y''(r)}{y'(r)}
		=\frac{K\,l(l-1)\,r^{l-1}+o(r^{l-1})}{K\,l\,r^{l-1}+o(r^{l-1})}
		\;\longrightarrow\;l-1,
		\]
		we have
		\begin{equation}\label{eqAB2}
			B=\lim_{r\to 0}\frac{H'(r)}{y'(r)}= (n-1)k^{n-1} - (l-1)k^{n-1}
			=(n-l)\,k^{\,n-1}.
		\end{equation}
	Finally, substituting \eqref{eqAB2} into \eqref{eqAB1} gives
		\[
		A = k^n - B\,k
		= k^n - (n-l)\,k^n
		= (l+1-n)\,k^n.
		\]
		This completes the proof.
	\end{proof}

	We conclude this section with the proof of part \eqref{PartcscK} of \Cref{thmNew}.
	
	\begin{proof}[Proof of \eqref{PartcscK}] 
		We shall prove the following more explicit form of its conclusion. Up to a linear change of coordinates, one of the following holds:
		\begin{enumerate}[(2a)]
			\item
			$g=m g_{FS}$ for some positive integer $m$, where $\omega_{FS}=\frac{i}{2}\partial\bar\partial\log(1+|z|^2)$;  \label{casoA}
			\item
			$g=g_0$, where $\omega_0=\frac{i}{2}\partial\bar\partial |z|^2$;  \label{casoB}
			\item
			$n=2$ and $g$ is a generalized Burns--Simanca metric $g_{k,BS}$ for some positive integer $k$, i.e. $
\omega_{k,BS}=\frac{i}{2}\partial\bar\partial\bigl(k\log|z|^2+|z|^2\bigr).
$ \label{casoC}
		\end{enumerate}

			If $g$ is finitely projectively induced then, by  \cite[Theorem 1.1]{LSZext}  $g$ is a complex space form and hence we are in case (2a) by Calabi's results.
		Hence assume $g$ to be infinitely projectively induced. Then, by the previous results, the function $y(r)=rf'(r)$ satisfies the ODE

				\begin{equation}\label{ODEgood}
					r\,y'(r)
					= y(r)
					\;-\;\frac{(l+1-n)\,k^n}{y(r)^{\,n-1}}
					\;-\;\frac{(n-l)\,k^{\,n-1}}{y(r)^{\,n-2}}
				\end{equation}
				for integers $k\geq 0$ and $l>0$.
		We will study three cases separately: $k=0$, $k\geq1$ with $n=2$ and  $k\geq1$ with $n\geq3$.
		
		If $k=0$, we deduce by \eqref{ODEgood} that $ry'(r)= y(r)$ yielding $y(r)=ar$, for some $a>0$ and it follows that we are in case (2b). 
		
		When $n=2$ and $k\geq 1$, equation \eqref{ODEgood}  becomes
		$$r\,yy'= y^2+(l-2)ky - (l-1)\,k^2=(y-k)(y+(l-1)k)$$
		and hence
		\[
		\left(\frac{1}{y - k} + \frac{l - 1}{y + (l - 1)k}\right)y' = \frac{l}{r}.
		\]
		Rewriting as
		\[
		\frac{d}{dr}\log(y-k)+\frac{d}{dr}\log(y+(l-1)k)^{\,l-1}
		=\frac{d}{dr}\log r^l,
		\]
		and integrating one gets
		\[
		(y(r)-k)\,\bigl(y(r)+(l-1)k\bigr)^{\,l-1}
		= a\,r^l,
		\]
		where $a=K(lk)^{l-1}>0$. We claim that $l=1$. Indeed, assume by contradiction $l>1$. Since the function
		$$
		R(y(\xi)):=(y(\xi)-k)\,\bigl(y(\xi)+(l-1)k\bigr)^{\,l-1},
		$$
		is a polynomial in $y$, and $y$ is meromorphic by Corollary \ref{lemmagood}, then $y$ cannot have finite poles, that would be poles of $R(y)$, whereas $a\xi^l$ is entire. This forces $y$ to be entire. Moreover, from $R(y(\xi))=a\xi^l$ it follows that $y$ has at most linear growth, that is $y$ is a polynomial of degree at most one.
On the other hand, near \(\xi=0\) we have
\[
y(\xi)=k+K\xi^l+o(\xi^l),
\qquad K>0.
\]
Since \(l>1\), this gives \(y'(0)=0\). A polynomial of degree at most one with
\(y'(0)=0\) is constant, contradicting the fact that \(R(y(\xi))=a\xi^l\) is non-constant.

			Thus $l=1$, i.e. $y=ar+k$. This leads to $f=\log r^k+ar$ and we fall in case (2c). 
		
		To complete the proof, we need to show that one cannot have both $n\geq 3$ and $k\geq 1$. Suppose
		the contrary. We distinguish two cases: $n=l,l+1$ and $n\neq l,l+1$. 
When \(n=l+1\) equation \eqref{ODEgood} becomes
\[
r y'(r)y(r)^{n-2}=y(r)^{n-1}-k^{n-1},
\]
thus
\[
\frac{d}{dr}\log\bigl(y(r)^{n-1}-k^{n-1}\bigr)
=
\frac{n-1}{r},
\]
and therefore
\[
y(r)^{n-1}-k^{n-1}=a r^{n-1}
\]
for $a=(n-1)k^{n-2}K>0$. By the meromorphic extension of \(y\), the identity extends to
\[
y(\xi)^{n-1}=k^{n-1}+a\xi^{n-1}
\]
on \(\mathds C\). Since  \(n\geq 3\), this contradicts the fact that the polynomial
\[
k^{n-1}+a\xi^{n-1}
\]
has \(n-1\) distinct non-zero zeros, all of order one, whereas the zeros of the
\((n-1)\)-st power of a meromorphic function must have order divisible by \(n-1\).

The case \(n=l\) is analogous. In that case we obtain
\[
y(\xi)^n=k^n+a\xi^n,
\]
with \(a=nk^{n-1}K>0\). The polynomial \(k^n+a\xi^n\) has \(n\) distinct non-zero
simple zeros, whereas the zeros of the \(n\)-th power of a meromorphic function must
have order divisible by \(n\), which is impossible.

		If $n\neq l , l+1$,  again by  \eqref{ODEgood}, we can write
		\begin{equation}\label{ypol}
			\xi\,y^{n-1}y'=P(y).
		\end{equation}
		where,
		$$P(y):=y^n -(n-l)k^{n-1}y+(n-l-1)\,k^n$$  
		is a polynomial in $y$.
		A very short computation shows that $P(y)$ and $P'(y)$ have no common roots so that the polynomial $P(y)$ has $n$ distinct roots $y_1=k, y_2, \dots , y_n$. Moreover $0$ is not a root of $P$, since $ P(0)=(n-l-1)k^n\neq 0$ for $n\neq l+1$.  Observe that $y$ cannot vanish, since $y(\xi_0)=0$ for some $\xi_0\in\mathds C$, implies $\xi_0\neq 0$, because $y(0)=k>0$, and evaluating \eqref{ypol} at $\xi=\xi_0$, the left hand side would be zero, while the right hand side is \(P(0)\neq 0\), a contradiction.
		
		Now either one of the following holds.
		\begin{enumerate}
			\item
			$y(\xi_0)=y_{j_0}$,  for some non-zero $\xi_0\in\C$ and some $j_0=2, \dots, n$;\label{case1}
			\item
			$y(\xi)\neq y_j$, for any $\xi$ and for any $j=2, \dots , n$.\label{case2}
		\end{enumerate}

		If \eqref{case1} holds, then, since \(\xi_0\neq 0\) and \(y_{j_0}\neq 0\), \eqref{ypol} can be
written near \((\xi_0,y_{j_0})\) as
\[
y'=\frac{P(y)}{\xi y^{n-1}},
\]
with holomorphic right hand side. Since \(P(y_{j_0})=0\), the constant function
\(y\equiv y_{j_0}\) is a local solution. By uniqueness for holomorphic ODE, \(y\) is constant in a neighbourhood of \(\xi_0\), and hence
constant by analytic continuation.
		This contradicts \(y(0)=k\), since \(y_{j_0}\neq k\). When \eqref{case2} holds, the meromorphic function \(y\) omits the three distinct values \(0,y_2,y_3\). By the Little Picard theorem, a non-constant meromorphic function on \(\mathds C\) can omit at most two values. Hence \(y\) must be constant, contradicting the expansion \(y(r)=k+Kr^l+o(r^l)\), with \(K>0\).
\end{proof}

    \section{Radial K\"ahler--Einstein metrics with negative scalar curvature}\label{sec:examples}
In this section we give the proof of \eqref{PartKE} of \Cref{thmNew}. Before doing so, we present two families of infinitely projectively induced radial cscK metrics with negative scalar curvature which do not have constant holomorphic sectional curvature. These examples also illustrate some of the difficulties involved in the classification of radial projectively induced cscK metrics with negative scalar curvature, which remains open in general.

  \begin{example}\label{ex:negative}\rm
      For $l\in \mathds Z ^+$, $l>0$, define:
  $$
f(r)=(l-1)\log(r)-2\log(1-r^l).
$$
The function $f$ is a K\"ahler potential for a radial cscK metric $g_l$ on the punctured unit ball of $\mathds C^2$, that satisfies the ODE \eqref{ypsi} with $C=-\frac 12$, $A=0$, $B=\frac{l^2-1}{2}$.
  The associated $y$ function reads:
  $$
  y(r)=rf'(r)=\frac{l-1+(l+1)r^l}{1-r^l}.
  $$
 When $l=1$, $g_1$ is the hyperbolic metric, while for $l=3$ the metric $\frac12g_3$ appeared in \cite[Example~3]{LSZext}. Since:
 $$
 e^{f(r)}=\frac{r^{l-1}}{(1-r^l)^2}=\sum_{j=1}^\infty jr^{lj-1},
 $$
  the metrics $g_l$ are all infinitely projectively induced. It is worth pointing out that, for \(c>0\), the rescaled metric \(cg_l\) is projectively
induced if and only if $c(l-1)$ is a non-negative integer.
Indeed,
\[
e^{c f(r)}=r^{c(l-1)}(1-r^l)^{-2c},
\]
and the binomial expansion of \((1-r^l)^{-2c}\) has positive coefficients for every
\(c>0\).
 \end{example}

\begin{example}\label{ex:negativeA}\rm
Let $k\in\mathds{Z}^{+}$ and $\lambda>0$, and define
\[
H(t):=
\frac{t+\sqrt{1+\frac{t^2}{2}}}
     {2\left(1-\frac{t^2}{2}\right)}.
\]
Consider the radial K\"ahler potential
\begin{equation}\label{newnegativepotential}
f_{k,\lambda}(r)
=
k\log r
+
k\int_{0}^{\lambda r^2}H(t)\,dt
\end{equation}
on the punctured ball
\[
0<r<\left(\frac{\sqrt{2}}{\lambda}\right)^{1/2}.
\]

Set $t=\lambda r^2$. A direct computation gives
\[
y(r)=rf_{k,\lambda}'(r)=ku(t),
\]
where
\[
u(t)
=
1+
\frac{t^2+t\sqrt{1+\frac{t^2}{2}}}
     {1-\frac{t^2}{2}}.
\]
The function $y$ satisfies
\[
ry'
=
y+\frac{k^2}{y}-3k+\frac{y^2}{k}.
\]
Therefore, in the notation of \eqref{ypsi},
\[
A=-k^2,\qquad B=3k,\qquad C=-\frac{1}{k},
\qquad D=0.
\]
It follows that $f_{k,\lambda}$ defines a radial cscK metric
$g_{k,\lambda}$ with scalar curvature
\[
\operatorname{scal}(g_{k,\lambda})=-\frac{6}{k}.
\]

In order to see that $g_{k,\lambda}$ is infinitely projectively
induced, set
\[
E_1(t)
=
\exp\left(\int_0^t H(\tau)\,d\tau\right).
\]
Then
\[
e^{f_{k,\lambda}(r)}
=
r^k E_1(\lambda r^2)^k.
\]
It is not difficult to verify that the Taylor expansion of $E_1$
at the origin has strictly positive coefficients. Since
$k\in\mathds{Z}^{+}$, the same is true for $E_1^k$, and hence
$g_{k,\lambda}$ is infinitely projectively induced.

Notice that this example belongs to the region $A<0$, whereas the
family $g_l$ above satisfies $A=0$. In particular, it is not obtained
from that family by a homothety. The parameter $\lambda$ can be
removed by a linear dilation of the coordinates.
\end{example}
 
We can now focus on the proof of \eqref{PartKE} of \Cref{thmNew} for which we need the following technical lemma.

\begin{lem}\label{tec}
Let $n\geq 2$, $s>0$, $k>0$, and set $L:=n+(n+1)s$. Define:
\begin{equation}\label{pu}
P(u):=u^n+su^{n+1}-(1+s),
\end{equation}
and let $t=t(u)$ be the germ at $u=1$ defined by
\begin{equation}\label{tu}
\frac{d\log t}{du}=\frac{L u^{n-1}}{P(u)},\quad t(u)=u-1+O((u-1)^2).
\end{equation}
Define also $Q(u)$ by $P(u)=(u-1)Q(u)$, i.e. 
\begin{equation}\label{ququ}
Q(u):=\sum_{j=0}^{n-1}u^j+s\sum_{j=0}^nu^j,
\end{equation}
and let $E=E(u)$ be the germ defined by:
\begin{equation}\label{logE}
\frac{d\log E}{du}=\frac{k u^{n-1}}{Q(u)},\quad E(1)=1.
\end{equation}
Let $E(t)$ be the corresponding germ at $t=0$, namely $E(t(u))=E(u)$. Then the Taylor expansion
\begin{equation}\label{Et}
E(t)=\sum_{j=0}^\infty e_jt^j
\end{equation}
cannot have all coefficients $e_j\geq 0$.
\end{lem}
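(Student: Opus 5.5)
The strategy is to argue by contradiction via Pringsheim's theorem: if all $e_j\ge 0$, then the radius of convergence $R\in(0,+\infty]$ of \eqref{Et} is attained by a singularity at $t=R$ on the positive real axis, and moreover $|E(t)|\le E(|t|)$ for $|t|<R$. So I would study the analytic continuation of $E$ along the real $t$-axis in both directions, using the parametrisation by $u$ given by \eqref{tu} and \eqref{logE}, and show that the two pieces of information are incompatible.

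\emph{Step 1 (positive direction).} For $u>1$ we have $P(u)>0$ and $Q(u)>0$, so by \eqref{tu} $t$ is a strictly increasing real-analytic function of $u\in(1,\infty)$. Since $\frac{d\log t}{du}\sim \frac{L}{s u^{2}}$ as $u\to\infty$, $t$ tends to a finite limit $t_\infty>0$. Near $u=\infty$, the variable $w=1/u$ gives $t-t_\infty=-\frac{L}{s}w+O(w^2)$, so $w$ is an analytic function of $t$ near $t_\infty$. Similarly $\frac{d\log E}{du}\sim\frac{k}{su^2}$ makes $E$ analytic in $w$. Hence $E$ continues analytically along the whole segment $[0,t_\infty]$ and across it, with $E>0$ there. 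I would then follow the continuation for $t>t_\infty$, which corresponds to $u$ increasing from $-\infty$ on the negative axis. The aim is to show that it stays analytic for all $t>0$, or at least that its first singularity on the positive axis lies at a modulus larger than some singularity found elsewhere. The candidates for singularities in the $u$-picture are the real roots of $P$ and $Q$ for $u<0$, and the point $u=0$, where $dt/du$ vanishes to order $n-1$ and creates a branch point.

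\emph{Step 2 (negative direction).} For $u\in(0,1)$ we have $P(u)<0$, so $t<0$. As $u\to 0^+$, $\frac{d\log t}{du}\to 0$, hence $t\to t_0<0$ with $t-t_0\sim c\,u^{n}$, $c\neq0$. Thus $u\sim (t-t_0)^{1/n}$ has a genuine branch point at $t_0$. I would expand $E$ in $u$ near $0$: from \eqref{logE}, $E(u)=E(0)\bigl(1+\frac{k}{nQ(0)}u^n+\dots\bigr)$. I would then check whether some term of $E$ in $u$ is not a power of $u^n$, which would make $E(t)$ non-analytic at $t_0$; this comparison of the two expansions uses $k>0$ and $s>0$. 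If it is, then $R\le|t_0|$. I would also compute the real values $E(t)$ on $(t_0,0)$, where $E$ is decreasing in $|t|$ because $u$ decreases and $d\log E/du>0$.

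\emph{Step 3 (contradiction).} The conclusion depends on what Steps 1 and 2 give. If $E$ is singular at $t_0$ but regular on $(0,|t_0|]$, Pringsheim's theorem is contradicted directly. If instead no singularity occurs at $t_0$, I would use the inequality $E(t)\le E(|t|)$ against explicit behaviour: for instance, show that $E(-\tau)$ and $E(\tau)$ violate the requirement that the even part $\tfrac12(E(\tau)+E(-\tau))$ dominate the odd part, or that the Taylor coefficient $e_2$ (or a low-order coefficient computed from \eqref{tu}--\eqref{logE}) is negative for the relevant ranges of $s$ and $k$. Concretely, I would first compute $e_1$ and $e_2$ explicitly as rational functions of $n,s,k$, since a negative low coefficient finishes the proof outright.

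\emph{Main obstacle.} The hard step is the global one: controlling where the continuation of $E(t)$ is first singular, in modulus, compared with the positive real axis. This requires tracking $t(u)$ through $u=\infty$ into negative $u$ and locating the complex roots of $Q$ and $P$. My first attempt would be the local branch-point analysis at $u=0$ in Step 2, because it is fully explicit. If that does not suffice, I would fall back on the low-order coefficient computation.
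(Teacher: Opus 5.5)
Your proposal has a genuine gap: nothing in it compares $|t_0|$ with $t_\infty$, and that comparison is what makes the argument close. The rest of the framework is right: Pringsheim, the branch point of $E(t)$ at $t_0=t(0)<0$, and analyticity of $E$ along $(0,t_\infty]$. The branch point is easy to secure, since the $u^{n+1}$-coefficient of $E$ at $u=0$ is $-kE(0)/((n+1)(1+s))\neq 0$.

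Pringsheim only yields a contradiction if $E$ is known to be regular on $(0,|t_0|]$, and your Step 1 gives regularity only up to (slightly beyond) $t_\infty$. The paper proves $|t_0|<t_\infty$ as follows. It sets $\Phi(x)=\log t(1/x)-\log|t(x)|$ on $(0,1)$, observes $\Phi(0^+)=\log(t_\infty/|t_0|)$ and $\Phi(1^-)=0$, and shows $\Phi'<0$. After clearing the denominators $P(x)<0<P(1/x)$, this reduces to
\[
2+s\left(x+x^{-1}\right)-(1+s)\left(x^{n}+x^{-n}\right)<0,\qquad 0<x<1,
\]
which holds because $x^n+x^{-n}\geq x+x^{-1}>2$. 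Then $R\leq|t_0|<t_\infty$, and Pringsheim is contradicted at once.

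Your proposed substitutes do not close the gap in general. Continuing past $t_\infty$ through negative $u$ works only when $P$ has a negative real root $u_1$: then $t\to+\infty$ along $u\in(-\infty,u_1)$ with no singularity of $E$. If $P$ has no negative root, as happens for $n$ even and $s$ large (e.g. $n=2$, $s>1/3$), the positive axis beyond $t_\infty$ is covered by $u\in(-\infty,0)$. This ends at $t_0'=t_\infty\exp\bigl(\int_{-\infty}^{0}Lu^{n-1}P(u)^{-1}\,du\bigr)>t_\infty$, which is the image of $u=0$ reached through $u=\infty$ and is a second branch point of $E$. You must then compare $|t_0|$ with a positive-axis singularity, which is exactly the missing estimate.

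The low-order fallback also fails. One finds $e_1=k/L>0$ and
\[
e_2=\frac{k\bigl(2k-n(n-1)-s(n+1)(n-2)\bigr)}{4L^{2}},
\]
so for $n=2$, $e_2=k(k-1)/(2L^2)\geq 0$ for every $k\geq1$. There is no reason a fixed low-order coefficient should be negative uniformly in $n,s,k$.

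A minor correction: near $u=\infty$ one has $t-t_\infty=-(Lt_\infty/s)\,w+O(w^2)$, not $-(L/s)w$.
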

  \begin{proof}
  We divide the proof in five steps.
  \begin{enumerate}
  \item[Step 1:] {\bf $t$ as a function of $u$}\\
  Observe that since $Q(u)>0$ for $u>0$, the only positive zero of $P(u)$ is $u=1$. Since $P(u)=(u-1)Q(u)$ and $Q(1)=L$, the function
$$
H(u)=\frac{L u^{n-1}}{P(u)}-\frac{1}{u-1}=\frac{1}{u-1}\left(\frac{L u^{n-1}}{Q(u)}-1\right),
$$
extends holomorphically to a neighborhood of $u=1$, since
the numerator in the last expression vanishes at $u=1$, because $Q(1)=L$, hence the apparent pole at $u=1$ is removable.
It follows that by \eqref{tu},
 $$
\frac{d\log t}{du}=\frac{1}{u-1}+H(u)=\frac{d\log(u-1)}{du}+H(u)
 $$
 that is
 $$
 \frac{d}{du}\log\left(\frac{ t(u)}{u-1}\right)=H(u),
 $$
which integrated gives:
 $$
 \log\left(\frac{ t(u)}{u-1}\right)=\int_1^uH(v)dv+{\rm constant}.
 $$
 Since $t(u)=u-1+O((u-1)^2)$, the constant is actually 0, and we get:
 $$
  t(u)=(u-1)e^{\int_1^uH(v)dv}
 $$
as a real analytic continuation on the positive real axis, away from $u = 1$. In particular, for $0<u<1$, we get $t(u)<0$ and hence $t_*:=t(0)<0$, while for $u>1$, $t(u)>0$, and since as $u\rightarrow +\infty$
$$
\frac{L u^{n-1}}{P(u)}\sim \frac{L u^{n-1}}{s u^{n+1}}=\frac{L}{su^2},
$$
it follows
$$
H(u)=-\frac1{u}+O\left(\frac1{u^2}\right).
$$
Thus, for some real constant $c_0$,
$$
\int_0^uH(v)dv=-\log u+c_0+O\left(\frac1u\right),
$$
and we get
$$
e^{\int_0^uH(v)dv}=\frac{e^{c_0}}{u}\left(1+O\left(\frac1u\right)\right),
$$
hence as $u\rightarrow +\infty$
$$
t(u)=(u-1)\frac{e^{c_0}}{u}\left(1+O\left(\frac1u\right)\right)\rightarrow e^{c_0},
$$
and we can define
$$
t_\infty:=\lim_{u\rightarrow +\infty} t(u)=e^{c_0}.
$$
  \item[Step 2:] {\bf $|t_*|<t_\infty$}\\
Set $x=\frac1u$, and define
$$
\Phi(x):=\log\left(t(1/x)\right)-\log(|t(x)|).
$$
Then:
$$
\lim_{x\rightarrow 0^+}\Phi(x)=\log \frac{t_\infty}{|t_*|},
$$
while
$$
\lim_{x\rightarrow 1^-}\Phi(x)=0.
$$
Thus:
$$
\log \frac{t_\infty}{|t_*|}=\Phi(0)-\Phi(1)=-\int_0^1\Phi'(x)dx.
$$
Let us show that $\Phi'(x)<0$ for $0<x<1$. Observe that by $P(u)=(u-1)Q(u)$ we get
$$
P(1/x)=\frac{1-x}xQ(1/x),
$$
and by \eqref{tu}
$$
\frac d{dx}t(1/x)=-\frac{t(1/x)L x^{-n-1}}{P(1/x)},\qquad \frac d{dx}|t(x)|=\frac{|t(x)|L x^{n-1}}{P(x)},
$$
and 
$$
\Phi'(x)=\frac{d}{dx}\log t(1/x)-\frac{d}{dx}\log |t(x)|=-\frac{L x^{-n-1}}{P(1/x)}-\frac{L x^{n-1}}{P(x)}.
$$
To conclude, let us show that $\Phi'(x)<0$, that is
$$
\frac{L x^{-n-1}}{P(1/x)}+\frac{L x^{n-1}}{P(x)}>0.
$$
Since for $0<x<1$, $P(1/x)>0$ while $P(x)<0$, it is equivalent to
$$
 x^{-n}P(x)+P(1/x) x^{n}<0,
$$
that by
$$
P(1/x) x^{n}=\frac{1-x}{x}\left(Q(x)+x^n-1\right),
$$
it reduces to
$$
(1-x)\left(\frac{1}{x}-\frac1{x^{n}}\right)Q(x)+\frac{(x^n-1)(1-x)}{x}<0,
$$
which is always true for $0<x<1$, $n\geq 2$, since $(1-x)Q(x)>0$, $x^{-n}>x^{-1}$, $(x^n-1)(1-x)<0$.
  \item[Step 3:] {\bf Local behavior at $u=0$}\\
 By \eqref{pu} and \eqref{tu} we get
$$
\frac{dt(u)}{du}=\frac{t(u)Lu^{n-1}}{P(u)}=-\frac{t_*L}{1+s}u^{n-1}+O(u^{2n-1})
$$
where we used that $t(u)=t_*+o(1)$. It follows that
$$
t(u)=t_*-\frac{t_*L}{n(1+s)}u^{n}+O(u^{2n})
$$
and thus
$$
1-\frac{t}{t_*}=\frac{L}{n(1+s)}u^{n}+O(u^{2n})
$$
which implies that the term $u^n$ is holomorphic as a function of $t$ near $t_*$,
since we can write
\begin{equation}\label{un}
u^n=\frac{n(1+s)}{L}\left(1-\frac{t}{t_*}\right)+O\left(\left(1-\frac{t}{t_*}\right)^2\right),
\end{equation}
while the term $u$ reads
$$
u=\left(\frac{n(1+s)}{L}\right)^{1/n}\left(1-\frac{t}{t_*}\right)^{1/n}\left(1+O\left(1-\frac{t}{t_*}\right)\right),
$$
and thus
\begin{equation}\label{un1}
u^{n+1}=\left(\frac{n(1+s)}{L}\right)^{1+1/n}\left(1-\frac{t}{t_*}\right)^{1+1/n}\left(1+O\left(1-\frac{t}{t_*}\right)\right).
\end{equation}

\item[Step 4:] {\bf $E$ as a function of $t$}\\
Since by \eqref{ququ} $Q(u)=(1+s)(1+u+O(u^2))$, we get
$$
\frac1{Q(u)}=\frac1{1+s}(1-u+O(u^2)),
$$
and by \eqref{logE}
$$
\frac{d\log E}{du}=\frac{k}{1+s}\left(u^{n-1}-u^{n}+O(u^{n+1})\right)
$$
that is
$$
\log E=\log E_*+\frac{k}{n(1+s)}u^n-\frac{k}{(n+1)(1+s)}u^{n+1}+O(u^{n+2}),
$$
where $E_*:=E(0)$ is nonzero, since by \eqref{logE},
$$
\log E(u)=\int_1^u\frac{k v^{n-1}}{Q(v)}dv
$$
and as $Q(u)>0$ for $u\in [0,1]$, the integral is finite in the interval $[0,1]$ and
$$
E(0)=e^{-\int_0^1\frac{k v^{n-1}}{Q(v)}dv}\neq 0.
$$ 
Hence
$$
E(u)=E_*+a_nu^n+a_{n+1}u^{n+1}+O(u^{n+2}), \qquad a_{n+1}=-\frac{kE_*}{(n+1)(1+s)}\neq 0.
$$ 
By \eqref{un} and \eqref{un1}, we get
\begin{equation}\label{Etexp}
E(t)=E_{\rm reg}(t)+A\left(1-\frac{t}{t_*}\right)^{1+1/n}+O\left(\left(1-\frac{t}{t_*}\right)^{1+2/n}\right),
\end{equation}
with 
$$
A=-\frac{kE_*}{(n+1)(1+s)}\left(\frac{n(1+s)}{L}\right)^{1+1/n}\neq 0
$$
and $E_{\rm reg}$ holomorphic near $t_*$.

\item[Step 5:] {\bf Conclusion}\\
Assume by contradiction that all the coefficients in \eqref{Et} are nonnegative. Since by \eqref{Etexp} $E(t)$ has a singularity at $t_*$, its radius of convergence $R$ is less than or equal to $|t_*|$. By Vivanti--Pringsheim Theorem (see e.g. \cite[p.~235]{remmert}), $E(t)$ must have a singularity at $R\leq |t_*|$, but this is not possible since for every $t<t_\infty$, $u(t)>1$ and $dt/du\neq 0$, hence $E(t)$ extends holomorphically to a neighborhood of each point of the interval $(0,t_\infty)$.
\end{enumerate}
  \end{proof}

    \begin{proof}[Proof of \eqref{PartKE} of \Cref{thmNew} ]
    As observed after the statement of \Cref{thmNew}, the cases with non-negative Einstein constant follow from part \eqref{PartcscK} of \Cref{thmNew}  itself. It remains only to treat the case $\lambda<0$. In this case, by Theorem \ref{lemmasimple}, the associated function $y(r)=rf'(r)$ satisfies the ODE:
\begin{equation}\label{altraode}
   ry'=y-\frac A{y^{n-1}}+|C|y^2.
\end{equation}
    Observe that by \cite[Theorem 1.1]{LSZext} if the ambient space is finite dimensional then a radial extremal K\"ahler metric has constant holomorphic sectional curvature. Thus, assume that $g$ is infinitely projectively induced.  By Corollary \ref{lem:y_expansion} we can write
\begin{equation}\label{altrayr}
    y(r)=k+K r^l+o(r^l), \quad k\in\mathds Z_{\geq 0},\ l\in\mathds Z_{>0}, \ K>0,
\end{equation}
    and by Theorem \ref{lemmaimp}, 
    $$
    F(r)=e^{f(r)}=\sum_{j=0}^\infty a_j r^j, \qquad a_j\geq 0,
    $$
    where $a_k$ is the first nonvanishing coefficient.
    
    If  $k=0$, then $y(r)\rightarrow 0$ as $r\rightarrow 0$, thus $A=0$ and by Theorem \ref{lemmasimple} the metric is a multiple of the complex hyperbolic metric, as wished. Thus, assume $k>0$. 
    The rest of the proof is by contradiction and will follow by Lemma \ref{tec}. We assume that $\omega$ as above is projectively induced with $k>0$.
    
    Taking the limit for $r\rightarrow 0$ in both sides of \eqref{altraode} we get $A=k^{n}(1+|C|k)$. Setting
    $$
    u(r):=\frac{y(r)}k,\quad s:=|C|k,
 $$
 \eqref{altraode} becomes
$$  
 ru'=u-(1+s)u^{1-n}+su^2
$$ that is, multiplying by $u^{n-1}$,
\begin{equation}\label{altraodeu}
 P(u):=  ru'u^{n-1}=u^n-(1+s)+su^{n+1},
\end{equation}
 with $P(1)=0$ and $P'(1)=n+(n+1)s=:L$. Observe that $L=l$, since by \eqref{altrayr} and \eqref{altraodeu}, we get
 $$
 l\frac{K}{k}r^{l}+o(r^l)=L\frac{K}{k}r^l+o(r^l),
 $$
 thus in particular $L\in \mathds Z^+$ and $t:=\frac{K}{k}r^L$ is a holomorphic radial variable. Then by \eqref{altraodeu}:
 $$
  d\log r = \frac{u^{n-1}}{P(u)}du, 
 $$
 and it follows that $\log t=\log \frac{K}{k}+L\log r$, and thus
\begin{equation}\label{logtrad}
 \frac{d\log t}{du}=\frac{Lu^{n-1}}{P(u)},\qquad t=u-1+O\left((u-1)^2\right).
\end{equation}
 Since $u$ is a holomorphic germ in $t$ near $r=0$, $u(r)=u(t)$ and we have
\begin{equation}\label{Fkk}
\frac{rF'(r)}{F(r)}=y(r)=k u(t)=k+k(u(t)-1) 
\end{equation}
 that is
\begin{equation}\label{Fkkk}
 r\frac d{dr}\log \left(\frac{F(r)}{r^k}\right)=k(u(t)-1).
\end{equation}
 Since 
 $$
 \frac{dt}{dr}=\frac{LK}{k}r^{L-1}=\frac{Lt}{r}
 $$
 implies
 $$
 Lt\frac{d}{dt}=r\frac{d}{dr},
 $$
 \eqref{Fkkk} becomes
\begin{equation}\label{Fkkk2}
 \frac{d}{dt}\log \left(\frac{F(r)}{r^k}\right)=\frac{k}L\frac{u(t)-1}t.
\end{equation}
 since $u(t)-1=t+O(t^2)$, $\frac{u(t)-1}t$ is holomorphic near $t=0$, and we can define
 $$
 E(t):=e^{\frac{k}L\int_0^t\frac{u(\tau)-1}\tau d\tau},
 $$
that satisfies $E(0)=1$ (notice that $t(1)=0$, so this is equivalent to $E(t(1))=1$).
 By \eqref{Fkkk2}
\begin{equation}\label{FEt}
F(r)=a_kr^k E(t),
\end{equation}
 where $a_k$ is the first nonvanishing coefficient in the expansion of $F(r)$. Observe that since $L$ is a positive integer, the coefficients in the Taylor expansion of $E(t)$  are non-negative. More precisely,
 $$
 \frac{F(r)}{a_kr^k}= E\left(\frac{K}{k}r^L\right),
 $$
 thus all the $a_j$'s with $j-k$ that is not an integer multiple of $L$ vanish, and
 $$
 E(t)=\sum_{m=0}^\infty\frac{a_{k+mL}k^m}{a_kK^m}t^m.
 $$
 To conclude, it remains to show that $E(t)$ satisfies the hypothesis of Lemma \ref{tec}, that implies that at least one of the coefficients in the Taylor expansion of $E(t)$ is negative, giving the contradiction. Combining \eqref{Fkk}, \eqref{Fkkk2}, and \eqref{FEt}, we get
 $$
 y=ku=k+ Lt\frac{ E'(t)}{E(t)}
 $$
 and by \eqref{logtrad} this is equivalent to
 $$
 \frac{d\log E}{du}=Lt\frac{ E'(t)}{E(t)}\frac{u^{n-1}
}{P(u)}=\frac{ku^{n-1}}{Q(u)},\qquad Q(u):=\frac{P(u)}{u-1},
 $$
 concluding the proof of the theorem.
    \end{proof}

\end{document}